\def\NZQ{\mathbb}               % the font for N,Z,Q,R,C
\def\ZZ{{\NZQ Z}}
\def\FF{{\NZQ F}}
\def\frk{\mathfrak}               % font for "Fraktur"
\def\pp{{\frk p}}
\def\mm{{\frk m}}
\def\Phi{{\frk N}}
\def\g{\gamma}
\def\G{\Gamma}
\def\a{\alpha}
\def\b{\beta}
\def\s{\sigma}
\def\opn#1#2{\def#1{\operatorname{#2}}} % to make operators
\opn\chara{char} \opn\length{\ell} \opn\pd{pd} \opn\rk{rk}
\opn\projdim{proj\,dim} \opn\injdim{inj\,dim} \opn\rank{rank}
\opn\depth{depth} \opn\grade{grade} \opn\height{height}
\opn\embdim{emb\,dim} \opn\codim{codim}
\opn\Tr{Tr} \opn\bigrank{big\,rank}
\opn\superheight{superheight}\opn\lcm{lcm}
\opn\trdeg{tr\,deg}%\emph{
\opn\reg{reg} \opn\lreg{lreg} \opn\ini{in} \opn\lpd{lpd}
\opn\size{size}\opn{\mult}{mult}
\opn\div{div} \opn\Div{Div} \opn\cl{cl} \opn\Cl{Cl}
\opn\Spec{Spec} \opn\Supp{Supp} \opn\supp{supp} \opn\Sing{Sing}
\opn\Ass{Ass} \opn\Min{Min}
\opn\Ann{Ann} \opn\Rad{Rad} \opn\Soc{Soc}
\opn\Syz{Syz} \opn\Im{Im} \opn\Ker{Ker} \opn\Coker{Coker}
\opn\Am{Am} \opn\Hom{Hom} \opn\Tor{Tor} \opn\Ext{Ext}
\opn\End{End} \opn\Aut{Aut} \opn\id{id}
\opn\nat{nat}
\opn\pff{pf}%   \pf exists already
\opn\Pf{Pf} \opn\GL{GL} \opn\SL{SL} \opn\mod{mod} \opn\ord{ord}
\opn\Gin{Gin}
\opn\Hilb{Hilb}\opn\adeg{adeg}\opn\std{std}\opn\ip{infpt}
\opn\Pol{Pol}
\opn\sat{sat}
\opn\Var{Var}
\def\ra{{\rightarrow}}
\opn\aff{aff} \opn\con{conv} \opn\relint{relint} \opn\st{st}
\opn\lk{lk} \opn\cn{cn} \opn\core{core} \opn\vol{vol}
\opn\link{link} \opn\star{star}
\opn\gr{gr}
\def\pot#1#2{#1[\kern-0.28ex[#2]\kern-0.28ex]}
\opn\dirlim{\underrightarrow{\lim}}
\opn\inivlim{\underleftarrow{\lim}}
\let\iso=\cong
\let\Dirsum=\bigoplus
\let\to=\rightarrow
\def\Implies{\ifmmode\Longrightarrow \else
        \unskip${}\Longrightarrow{}$\ignorespaces\fi}
\def\implies{\ifmmode\Rightarrow \else
        \unskip${}\Rightarrow{}$\ignorespaces\fi}
\def\iff{\ifmmode\Longleftrightarrow \else
        \unskip${}\Longleftrightarrow{}$\ignorespaces\fi}
\newtheorem{Theorem}{Theorem}[section]
\newtheorem{Lemma}[Theorem]{Lemma}
\newtheorem{Corollary}[Theorem]{Corollary}
\newtheorem{Remark}[Theorem]{Remark}
\newtheorem{Definition}[Theorem]{Definition}
\let\epsilon\varepsilon
\let\phi=\varphi
\let\kappa=\varkappa
\def\qed{\ifhmode\textqed\fi
      \ifmmode\ifinner\quad\qedsymbol\else\dispqed\fi\fi}
\def\textqed{\unskip\nobreak\penalty50
       \hskip2em\hbox{}\nobreak\hfil\qedsymbol
       \parfillskip=0pt \finalhyphendemerits=0}
\def\dispqed{\rlap{\qquad\qedsymbol}}
\opn\dis{dis}
\def\pnt{{\raise0.5mm\hbox{\large\bf.}}}
\opn\Lex{Lex}
\def\homgr{{^{*}\hbox{\rm Hom}}}
\begin{document}

\title{Duality and Tameness}

\author{Marc Chardin, Steven Dale Cutkosky, J\"urgen Herzog  and Hema Srinivasan}
\thanks{The second author was partially supported by NSF}

\address{Marc Chardin, Institut Math\'{e}matique de Jussieu
Universit\'{e} Pierre et Marie Curie,
Boite 247,
4, place Jussieu,
F-75252 PARIS CEDEX 05 }\email{chardin@math.jussieu.fr
}
\address{Dale Cutkosky, Mathematics Department,
202 Mathematical Sciences Bldg,
University of Missouri,
Columbia, MO 65211 USA
}\email{dale@math.missouri.edu}

\address{J\"urgen Herzog, Fachbereich Mathematik und
Informatik, Universit\"at Duisburg-Essen, Campus Essen, 45117
Essen, Germany} \email{juergen.herzog@uni-essen.de}

\address{Hema Srinivasan, Mathematics Department,
202 Mathematical Sciences Bldg,
University of Missouri,
Columbia, MO 65211 USA
}\email{srinivasanh@math.missouri.edu <srinivasan@math.missouri.edu>}

\begin{abstract}
We prove a duality theorem for certain graded algebras and show by various examples different kinds of failure of tameness of local cohomology.
\end{abstract}

\maketitle
\section*{Introduction}
The purpose of this paper is to construct examples of strange behavior of local cohomology. In these constructions we follow a strategy that was already used in \cite{CH} and which relates, via a spectral sequence introduced in \cite{HR},  the local cohomology for the two distinguished bigraded prime ideals in  a standard bigraded algebra.

In the first part we consider algebras with  rather general gradings and deduce a similar spectral sequence in this more general situation. A typical example of such an algebra is the Rees algebra of a graded ideal. The proof for the spectral sequence given here is   simpler than that of the corresponding spectral sequence in \cite{HR}.

In the second part of this paper we construct examples of standard
graded rings $A$, which are algebras over a field $K$,  such that the
function
\begin{equation}\label{eqI1}
j\mapsto \mbox{dim}_K(H^i_{A_+}(A)_{-j})
\end{equation}
is an interesting function for $j\gg0$.
In our examples, this dimension will be finite for all $j$.

Suppose that $A_0$ is a Noetherian local ring, $A=\bigoplus_{j\ge 0}A_j$ is a
standard graded ring and set  $A_+:=\bigoplus_{j>0}A_j$.
Let $M$ be a finitely generated graded $A$-module and $\mathcal {F}:=\tilde M$ be the sheafification of $M$ on $Y=\mbox{Proj}(A)$.
We then have graded $A$-module isomorphisms
$$
H^{i+1}_{A_+}(M)\cong \bigoplus_{n\in{\bf Z}}H^i(Y,{\mathcal F}(n))
$$
for $i\ge 1$, and a similar expression for $i=0$ and $1$.

By Serre vanishing,  $H^i_{A_+}(M)_j=0$ for all $i$ and $j\gg0$.
However, the asymptotic behaviour of $H^i_{A_+}(M)_{-j}$ for  $j\gg0$
is much more mysterious.

In the case when $A_0=K$ is a field, the function (\ref{eqI1})
is in fact a polynomial for large enough $j$. The proof is a consequence of
graded local duality (\cite[13.4.6]{BS} or \cite[3.6.19]{BH})  or follows from Serre duality on a projective variety.

For more general $A_0$, $H_{A_+}(M)_{-j}$ are finitely generated $A_0$
modules, but need not have finite length.

The following problem was proposed by Brodmann and Hellus  \cite{BrHe}.
\vskip .2truein
\noindent {\bf Tameness problem:} Are the local cohomology modules
$H^i_{A_+}(M)$ tame? That is, is it true that either
$$
\{ H^i_{A_+}(M)_j\ne 0,\ \forall j\ll 0\} \mbox{ or }\{ H^i_{A+}(M)_j=0,\ \forall j\ll 0 \} ?
$$
\vskip .2 truein
The problem has a positive solution for $A_0$ of small dimension
(some of the references are Brodmann \cite{Br}, Brodmann and  Hellus \cite{BrHe},  Lim \cite{L}, Rotthaus and Sega \cite{RS}).
\begin{Theorem}[\cite{BrHe}]
If $\dim(A_0)\le 2$, then $M$ is tame.
\end{Theorem}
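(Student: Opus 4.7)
The plan is to induct on $d := \dim A_0 \leq 2$, with the Artinian case handled by graded local duality and the cases $d \geq 1$ reduced via a regular element in $\mm_0$, the maximal ideal of $A_0$.

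\textbf{Base case $d = 0$.} Here $\mm_0$ is nilpotent, so $\mm := \mm_0 + A_+$ satisfies $H^i_\mm(M) = H^i_{A_+}(M)$. Passing to the $\mm_0$-adic completion of $A_0$ (which is faithfully flat and preserves the tameness question in each graded degree), graded local duality identifies the Matlis $A_0$-dual of $H^i_{A_+}(M)_j$ with the graded component in degree $-j$ of the finitely generated graded $A$-module $\Ext^{n-i}_A(M, \omega_A)$, where $\omega_A$ is a graded dualizing module. Since the Hilbert function of a finitely generated graded $A$-module over an Artinian base is eventually polynomial, each such component is either eventually zero or eventually nonzero, and Matlis duality over the Artinian local ring $A_0$ detects this dichotomy; tameness of $H^i_{A_+}(M)$ follows.

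\textbf{Inductive step $d \in \{1,2\}$.} After replacing $M$ by $M/\G_{\mm_0}(M)$, which alters only $H^0_{A_+}$ in a controlled way, pick $x \in \mm_0$ that is $M$-regular. The short exact sequence $0 \to M \xrightarrow{x} M \to M/xM \to 0$ yields
\[
\cdots \to H^{i-1}_{A_+}(M/xM)_j \to H^i_{A_+}(M)_j \xrightarrow{\,\cdot x\,} H^i_{A_+}(M)_j \to H^i_{A_+}(M/xM)_j \to \cdots.
\]
Now $M/xM$ is a finitely generated graded $A/xA$-module with $\dim(A/xA)_0 = d - 1 < d$, so by induction $H^\ast_{A_+}(M/xM)$ is tame. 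A degree-by-degree analysis of the long exact sequence then enforces a dichotomy: either multiplication by $x$ is surjective on $H^i_{A_+}(M)_j$ for all $j \ll 0$ (from which a descent/Mittag-Leffler argument, combined with tameness of $H^{i-1}_{A_+}(M/xM)$, yields $H^i_{A_+}(M)_j = 0$ eventually), or the cokernel is nonzero cofinally, forcing $H^i_{A_+}(M)_j \neq 0$ for all $j \ll 0$.

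\textbf{Main obstacle.} Unlike the Artinian base case, $H^i_{A_+}(M)_j$ is typically not finitely generated as an $A_0$-module, so Nakayama's lemma cannot be applied directly to pass from surjectivity of the action of $x \in \mm_0$ to eventual vanishing. The essence of the Brodmann--Hellus argument is to exploit the smallness of $\Spec A_0$ when $\dim A_0 \leq 2$ to control the $A_0$-support of each cohomology module precisely enough that the cokernel sequence, together with the inductively established tameness of $H^\ast_{A_+}(M/xM)$, forces the stated dichotomy on the low-degree tail. Extending this strategy beyond $d = 2$ would require a much finer handle on the $A_0$-structure of these infinitely-generated graded pieces, and is the reason the result is presently limited to $\dim A_0 \leq 2$.
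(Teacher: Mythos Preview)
This theorem is not proved in the paper; it is cited from Brodmann--Hellus \cite{BrHe} as background, so there is no proof here against which to compare your sketch. I therefore comment only on the correctness of your argument.

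There is a factual error in your ``Main obstacle'' paragraph: each graded piece $H^i_{A_+}(M)_j$ \emph{is} a finitely generated $A_0$-module. For $i\ge 2$ it identifies with $H^{i-1}(\mbox{Proj}(A),\widetilde M(j))$, coherent cohomology over the Noetherian base $\Spec A_0$; the paper itself records this in the introduction. Hence Nakayama's lemma is available, and your first branch (surjectivity of $x$ for all $j\ll 0$ forces $H^i_{A_+}(M)_j=0$) follows immediately, with no Mittag--Leffler argument needed.

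The genuine gap lies in the second branch. ``Cokernel nonzero cofinally'' gives $H^i_{A_+}(M)_j\ne 0$ only for infinitely many $j$, not for all $j\ll 0$. The long exact sequence yields
\[
0\to H^i_{A_+}(M)_j\big/xH^i_{A_+}(M)_j \to H^i_{A_+}(M/xM)_j \to \Ker\bigl(x:H^{i+1}_{A_+}(M)_j\to H^{i+1}_{A_+}(M)_j\bigr) \to 0,
\]
so when $H^i_{A_+}(M/xM)_j\ne 0$ for all $j\ll 0$, the nonvanishing may, at any given $j$, land entirely in the $x$-torsion of $H^{i+1}_{A_+}(M)_j$ rather than in the left-hand cokernel; nothing in your argument prevents this from alternating as $j$ varies. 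Such alternation is exactly non-tameness of $H^i_{A_+}(M)$, so at this point the induction is begging the question. Closing this gap is where the bound $\dim A_0\le 2$ actually does work in \cite{BrHe}, and their proof is more delicate than a straight induction on $\dim A_0$ via a single regular element: it exploits the (correct) finite generation of the $H^i_{A_+}(M)_j$ over $A_0$ together with control of their supports and associated primes in the low-dimensional $\Spec A_0$, rather than the obstacle you describe.
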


However, it has recently been shown by two of the authors that tameness can fail if $\mbox{dim}(A_0)=3$.

\begin{Theorem}[\cite{CH}] There are examples with $\dim(A_0)=3$ where
$M$ is not tame.
\end{Theorem}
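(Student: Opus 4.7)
The plan is to exploit the bigraded machinery developed in the first part of the paper. One takes a standard bigraded $K$-algebra $B$ over a field and views it as singly graded with respect to one of its two gradings, so that $A_0$ becomes the other Veronese-like subring. By arranging $A_0$ to be (the localization at an irrelevant ideal of) a polynomial ring in three variables, one achieves $\dim(A_0)=3$, and by arranging $B$ to have prescribed two-sided cohomological behavior, one can force non-tameness of $H^i_{A_+}(A)$.

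Concretely, I would start from $S=K[y_1,y_2,y_3,x_1,\dots,x_m]$ with $\deg(y_i)=(1,0)$ and $\deg(x_j)=(0,1)$, choose a bihomogeneous ideal $I\subset S$, and set $B=S/I$. Writing $\mm_1=(y_1,y_2,y_3)$, $\mm_2=(x_1,\dots,x_m)$ and $\mm=\mm_1+\mm_2$, the spectral sequence from the first part of this paper (a variant of the Herzog--Rahimi spectral sequence \cite{HR}) gives
\[
E_2^{p,q}=H^p_{\mm_1}(H^q_{\mm_2}(B))\Longrightarrow H^{p+q}_{\mm}(B),
\]
whose abutment is pinned down by graded local duality over $S$. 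Viewing $B$ as $A$ with the grading coming from the \emph{first} coordinate gives $A_0=\bigoplus_j B_{(0,j)}$, and to arrange $A_0$ to have dimension $3$ one chooses $I$ so that $I\cap K[x_1,\dots,x_m]$ produces a three-dimensional Veronese-type piece; the modules $H^i_{A_+}(B)$ then correspond to the $\mm_1$-local cohomology of $B$ as a bigraded module.

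The idea is to engineer $I$ so that the non-trivial differentials of the spectral sequence kill one parity (say all even strands) of some $E_2^{p,q}$, while leaving the opposite parity alone. Then reading the abutment back via the first-grading half of the spectral sequence, a single local cohomology module $H^i_{A_+}(A)_{-j}$ inherits a sparse but infinite non-vanishing pattern in $j$, contradicting tameness. A natural construction uses a complete intersection generated by bihomogeneous forms of carefully chosen bidegrees (or, equivalently, an auxiliary Koszul-like bicomplex) whose sheafification on $\mathrm{Proj}$ has known intermediate cohomology concentrated in one parity of twists.

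The main obstacle is the construction of the ideal $I$: one must simultaneously (i) keep $\dim(A_0)=3$, (ii) ensure that $B$ is finitely generated over $A_0$, and (iii) force a specific higher differential in the spectral sequence (typically $d_2$ or $d_3$) to be nonzero on exactly the desired strands. Once such an $I$ is produced, the remainder of the argument is essentially formal bookkeeping with the spectral sequence and local duality on $S$; the genuine work is in finding a geometric or combinatorial template (for instance, a suitable blow-up or a bigraded resolution of a sheaf with oscillating cohomology) that realizes the required non-tame pattern.
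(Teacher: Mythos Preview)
Your proposal correctly identifies the bigraded spectral sequence as the transfer mechanism, but it has a genuine gap: you never produce the ideal $I$. You write that ``the main obstacle is the construction of the ideal $I$'' and that ``once such an $I$ is produced, the remainder of the argument is essentially formal bookkeeping''---but producing such an $I$ \emph{is} the theorem. Naming the obstacle is not the same as overcoming it.

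Moreover, the shape you envisage for $A_0$ points in the wrong direction. You aim to make $A_0$ essentially a polynomial ring in three variables (or a localization thereof), and to find a combinatorial ideal $I$ whose spectral-sequence differentials oscillate by parity. The actual construction in \cite{CH}, restated here as Theorem~\ref{TheoremA0}, is geometric: $A_0=R_0$ is the homogeneous coordinate ring of an Abelian surface $X$ under a suitable projective embedding, so $\dim R_0=3$, but $R_0$ is \emph{not} Cohen--Macaulay (indeed $H^2_{P_0}(R_0)_0\cong H^1(X,\mathcal{O}_X)\ne 0$, as noted at the end of Section~\ref{Section4}). The oscillation does not come from a clever differential in the spectral sequence; it comes from the cohomology of line bundles on $X$. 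One takes $\mathcal{F}_1,\mathcal{F}_2$ built from an ample class $H$ and a nontrivial $2$-torsion class $D\in\text{Pic}^0(X)$, so that, as recorded in (\ref{eqN25}),
\[
h^1(X,\mathcal{O}_X(mH+nD))=\begin{cases}2&\text{if }m=0\text{ and }n\text{ is even},\\0&\text{otherwise}.\end{cases}
\]
This parity pattern is a feature of Abelian varieties (torsion in $\text{Pic}^0$ together with the vanishing $h^1(X,\mathcal{L})=0$ for nontrivial $\mathcal{L}\in\text{Pic}^0(X)$), and there is no evident combinatorial or complete-intersection template that reproduces it. Your proposed ``Koszul-like bicomplex'' would need to manufacture exactly this behaviour, and you give no indication how.

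In short: the spectral sequence and duality are packaging; the content is the Abelian-surface input, which your proposal neither supplies nor suggests.
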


The statement of this example is reproduced in Theorem \ref{TheoremA0} of this paper. The function (\ref{eqI1}) is periodic for large $j$.
Specifically, the function (\ref{eqI1}) is 2 for large even $j$ and is 0
for large odd $j$.

In Theorem \ref{TheoremA1} we construct an example of failure of tameness
of local cohomology which is not periodic, and is not even a quasi polynomial
(in $-j$) for large $j$. Specifically, we have
for $j> 0$,
$$
\mbox{dim}_K(H^2_{A_+}(A)_{-j})= \left\{\begin{array}{ll} 1&\text{ if $j\equiv 0\mbox{ (mod)
}(p+1)$,}\\
1&\mbox{ if $j=p^t$ for some odd $t\ge 0$,}\\
0&\mbox{ otherwise,}
\end{array}\right.
$$
where  the characteristic of $K$ is $p$. We have $p^t\equiv -1 \mbox{
(mod) } (p+1)$ for all odd $t\ge 0$.

We also give an example (Theorem \ref{TheoremA2}) of failure of tameness where (\ref{eqI1}) is a quasi polynomial  with linear
growth in even degree and is 0 in odd degree.

In Theorem \ref{TheoremA3}, we give a tame example, but we have
$$
\lim_{j\rightarrow\infty}\frac{\mbox{dim}_K(H^2_{A_+}(A)_{-j})}{j^3}=54\sqrt{2},
$$
so (\ref{eqI1}) is far from being a quasi polynomial in $-j$ for large $j$.

While the example of \cite{CH} is for $M=\omega_A$, where $\omega_A$ is the canonical
module of $A$, the examples of the paper are all for $M=A$. This allows us to
easily reinterpret our examples as Rees algebras  in Section \ref{Section4}, and thus we have examples
of Rees algebras over local rings for which the above failure of tameness holds.

In the final section, Section \ref{Section5}, we give an analysis of the
explicit and implicit role of
bigraded duality in the construction of the examples, and some comments
on how it effects the geometry of the constructions.

\section{Duality for polynomial rings in two sets of variables}

Let $K$ be any commutative ring (with unit). In later applications $K$ will be mostly a field. Furthermore let $S=K[x_{1},\ldots ,x_{m},y_{1},\ldots ,y_{n}]$, $P =(x_{1},\ldots ,x_{m})$ and
$Q=(y_{1},\ldots ,y_{n})$.

\medskip
The homology of the \v{C}ech complex ${\mathcal C}_{P}(\_ )$ (resp.\
${\mathcal C}_{Q}(\_ )$)
will be denoted by $H_{P}(\_ )$ (resp.\ $H_{Q}(\_ )$). Notice that for any commutative ring $K$, this homology is the
local cohomology supported in $P$ (resp.\ $Q$), as
$P$ and $Q$ are  generated by a regular sequences.

\medskip
Assume that $S$ is $\Gamma$-graded for some abelian group $\Gamma$, and that $\deg (a )=0$ for $a \in K$.
If $x^sy^p\in R$, $\deg (x^sy^p)=l(s)+l'(p)$ with $l(s):=\sum_{i}s_{i}\deg (x_{i})$ and
$l'(p):=\sum_{j}p_{j}\deg (y_{j})$.

\begin{Definition}{\em  Let $I\subset S$ be a $\Gamma$-graded ideal. The $\Gamma$-grading of $S$ is $I$-{\it sharp} if
$H^{i}_{I}(S)_{\gamma}$ is a finitely generated  $K$-module, for every $i$ and $\gamma \in \Gamma$.}
\end{Definition}

\begin{Lemma}
\label{grading}
The following conditions are equivalent:
\begin{enumerate}
\item[\rm (i)] the $\Gamma$-grading of $S$ is $P$-sharp.

\item[\rm (ii)] the $\Gamma$-grading of $S$ is $Q$-sharp.

\item[\rm (iii)] for all $\gamma \in \Gamma$, $| \{ (\alpha , \beta )\ \:\; \alpha\geq 0, \beta\geq 0,\quad l(\alpha )=\gamma +l'(\beta )\}| <\infty$.
\end{enumerate}
\end{Lemma}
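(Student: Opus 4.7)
The plan is to compute both $H_P(S)$ and $H_Q(S)$ explicitly and match them against the combinatorial condition (iii). Since $x_1,\dots,x_m$ is a regular sequence on $S$, the \v{C}ech complex $\Cc_P(S)$ has cohomology concentrated in degree $m$, and likewise $\Cc_Q(S)$ has cohomology concentrated in degree $n$. So the only graded pieces to analyze are $H^m_P(S)_\gamma$ and $H^n_Q(S)_\gamma$.

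Next I would use the standard description of the top local cohomology: $H^m_P(S)$ is a free $K$-module with basis the "negative" monomials
\[
\{x_1^{-a_1}\cdots x_m^{-a_m}\, y^\beta\::\: a_i\geq 1,\ \beta\in\NN^n\},
\]
each of $\Gamma$-degree $l'(\beta)-\sum_i a_i\deg(x_i)$. Setting $\alpha_i:=a_i-1\geq 0$, the basis of the graded piece $H^m_P(S)_\gamma$ is in bijection with
\[
\{(\alpha,\beta)\::\:\alpha\geq 0,\ \beta\geq 0,\ l(\alpha)=-\gamma-l(\mathbf{1})+l'(\beta)\},
\]
where $\mathbf{1}=(1,\dots,1)$. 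As $\gamma$ runs through $\Gamma$, so does $-\gamma-l(\mathbf{1})$; hence (i) is equivalent to asserting that for every $\gamma\in\Gamma$ the set in (iii) is finite. This gives (i)$\iff$(iii).

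An identical computation with the roles of $P,Q$ and $x,y$ swapped describes $H^n_Q(S)$ by the basis $\{x^\alpha y_1^{-b_1}\cdots y_n^{-b_n}\::\:\alpha\geq 0,\ b_j\geq 1\}$, whose $\gamma$-graded piece is in bijection with
\[
\{(\alpha,\delta)\::\:\alpha\geq 0,\ \delta\geq 0,\ l(\alpha)=(\gamma+l'(\mathbf{1}))+l'(\delta)\},
\]
again giving (ii)$\iff$(iii) by letting $\gamma$ vary. The equivalence (i)$\iff$(ii) then follows.

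The proof is essentially mechanical once one has the explicit monomial description of the top \v{C}ech cohomology, so there is no real obstacle; the only point to handle with care is keeping track of the overall shift by $l(\mathbf{1})$ or $l'(\mathbf{1})$, which is immaterial precisely because condition (iii) is required to hold \emph{for all} $\gamma\in\Gamma$.
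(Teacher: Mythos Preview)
Your proof is correct and is exactly the argument the paper has in mind: the paper states Lemma~\ref{grading} without proof, but the explicit monomial bases you use for $H^m_P(S)_\gamma$ and $H^n_Q(S)_\gamma$ are precisely the ones written down in the first paragraph of the proof of Lemma~\ref{dualmap}, and the equivalence with (iii) is then immediate by the shift argument you give.
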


Note that if $K$ is Noetherian, $M$ is a finitely generated $\Gamma$-graded $S$-module,
and the $\Gamma$-grading of $S$ is $I$-sharp, then $H^{i}_{I}(M)_{\gamma}$ is a finite $K$-module, for every $i$ and $\gamma \in \Gamma$. This follows from the converging $\Gamma$-graded spectral sequence $H_{p-q}(H^{p}_{I}(\FF ))\Rightarrow H^{q}_{I}(M)$, where $\FF$ is a $\Gamma$-graded
free $S$-resolution of $M$ with $\FF_i$ finite for every $i$.

\medskip
We will assume from now on that the $\Gamma$-grading of $S$ is $P$-sharp (equivalently $Q$-sharp). Set $\sigma= \deg(x_1\cdots x_my_1\cdots y_n)$, and if $N$ is a $\Gamma$-graded module, then let $N^\vee=\Hom_S(N,S(-\sigma))$ and $N^*=\homgr_K(N,K)$ where the $\Gamma$-grading of $N^*$ is given by $(N^*)_\gamma= \Hom_K(N_{-\gamma}, K)$. More generally, we always denote the graded $K$-dual of a
graded module $N$ (over what graded $K$-algebra soever)  by $N^*$.
Finally we denote by $\phi_{\alpha\beta}$ the map $S(-a)\to S(-b)$ induced by multiplication by $x^\alpha y^\beta$ where $a=\deg x^\alpha$ and $b=-\deg y^\beta$. Then

\begin{Lemma}
\label{dualmap}
$H^m_P(\phi_{\alpha\beta})_\gamma\iso H^n_Q( \phi_{\alpha\beta}^\vee)^*.$
\end{Lemma}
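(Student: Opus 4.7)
The plan is to prove this by explicit graded local duality on the polynomial ring $S$. I would first exhibit a perfect $S$-linear pairing between $H^m_P(S)$ and $H^n_Q(S)$ making them mutually $K$-dual up to a shift by $\sigma$, and then observe that both $\phi_{\alpha\beta}$ and $\phi_{\alpha\beta}^\vee$ are simply multiplication by $x^\alpha y^\beta$, so the induced maps on $H^m_P(S)$ and $H^n_Q(S)$ are adjoint under the pairing, hence $K$-dual to each other after unwinding the conventions.

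Using the \v{C}ech complex, $H^m_P(S)$ has $K$-basis $\{x^{-s}y^p : s\geq\mathbf{1}_m,\ p\geq 0\}$ and $H^n_Q(S)$ has $K$-basis $\{x^sy^{-p} : s\geq 0,\ p\geq\mathbf{1}_n\}$; each $\Gamma$-graded piece is a finite free $K$-module by the sharpness hypothesis. The natural pairing
$$H^m_P(S)_\gamma\tensor_K H^n_Q(S)_{-\gamma-\sigma}\To K,\qquad (u,v)\mapsto\text{coefficient of }(x_1\cdots x_my_1\cdots y_n)^{-1}\text{ in }uv,$$
is perfect on each graded piece (basis elements pair uniquely), so it yields canonical isomorphisms $H^m_P(S)_\gamma\iso (H^n_Q(S)_{-\gamma-\sigma})^*$, i.e., $H^m_P(S)_\gamma\iso (H^n_Q(S)^*)_{\sigma+\gamma}$ using the convention $(N^*)_\gamma=(N_{-\gamma})^*$. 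The pairing is visibly $S$-linear: $\langle su,v\rangle=\langle u,sv\rangle$ for every $s\in S$, since it is defined via multiplication of Laurent monomials.

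Next I would note that $\phi_{\alpha\beta}^\vee:S(b-\sigma)\to S(a-\sigma)$ is again multiplication by $x^\alpha y^\beta$, because applying $\Hom_S(\cdot,S(-\sigma))$ to a multiplication map returns the same multiplication with source and target swapped and shifted. Thus both $H^m_P(\phi_{\alpha\beta})$ and $H^n_Q(\phi_{\alpha\beta}^\vee)$ are induced by multiplication by $x^\alpha y^\beta$, and the $S$-linearity of the pairing gives that $H^m_P(\phi_{\alpha\beta})_\gamma:H^m_P(S)_{\gamma-a}\to H^m_P(S)_{\gamma-b}$ is the $K$-transpose of $H^n_Q(\phi_{\alpha\beta}^\vee)_{-\gamma}:H^n_Q(S)_{-\gamma+b-\sigma}\to H^n_Q(S)_{-\gamma+a-\sigma}$, which is precisely $H^n_Q(\phi_{\alpha\beta}^\vee)^*$ in $\Gamma$-degree $\gamma$.

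The substantive content is just graded local duality for the polynomial ring $S$; the main obstacle is the careful bookkeeping of the various shifts ($a$, $b$, $\sigma$) and the sign flip in the $*$-grading convention, which one must track precisely so that the sources and targets of the two maps are identified canonically under the duality. Everything else is routine once the pairing is in place.
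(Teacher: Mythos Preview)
Your proposal is correct and follows essentially the same approach as the paper: the residue pairing you write down is precisely the paper's explicit isomorphism $d_\gamma$ (the condition ``coefficient of $(x_1\cdots x_my_1\cdots y_n)^{-1}$ in $uv$ equals $1$'' unwinds to $s=t$ and $p=q$), and your $S$-linearity argument is exactly what the paper records as the commutativity of its square. The only cosmetic difference is that the paper writes the duality as a map $d_\gamma$ and a commutative diagram rather than as a pairing and an adjointness statement.
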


\begin{proof} The free $K$-module $H^{m}_{P}(S)_{\gamma}$ is generated by the elements $x^{-s-1}y^{p}$ with $s,p\geq 0$ and
$-l(s)-l(1)+l'(p)=\gamma$, and $H^{n}_{Q}(S)_{\gamma'}$ is generated by the elements $x^{t}y^{-q-1}$ with $t,q\geq 0$ and $l(t)-l'(q)-l'(1)=\gamma'$.

Let $d_\gamma\: H^{m}_{P}(S)_{\gamma} \to (H^{n}_{Q}(S^\vee)^*)_\gamma =H^{n}_{Q}(S)_{-\gamma-\sigma}$ be the $K$-linear map defined by
\[
d_\gamma(x^{-s-1}y^p)(x^ty^{-q-1})=\left\{ \begin{array}{ll}
       1, & \;\textnormal{if  $s=t$ and $p=q$}, \\
       0, & \;\textnormal{else.}
        \end{array} \right.
\]
Then $d_{\g}$ is an isomorphism (because the $\G$-grading of $R$ is $Q$-sharp) and there is a commutative diagram
$$
\begin{CD}
H^{m}_{P}(S)_{\g -a}&  @> {H^{m}_{P}(\phi_{\a \b})_{\g}}>>  & H^{m}_{P}(S)_{\g -b}\\
@V d_{\g -a} VV & & @V d_{\g -b} VV\\
(H^{n}_{Q}(S)_{-\g +a-\s})^{*}& @> {H^{n}_{Q}((\phi_{\a \b}^{\vee})_{-\g})^{*}} >> &
(H^{n}_{Q}(S)_{-\g +b-\s})^{*}.
\end{CD}
$$
The assertion follows.
\end{proof}

As an immediate consequence we obtain

\begin{Corollary}\label{basicduality}{\em  (a)} Let $f\in S$ be an homogeneous element of degree $a-b$, and
$\phi\:\ S(-a)\to S(-b)$ the graded degree zero map induced by multiplication with $f$. Then
$$
H^{m}_{P}(\phi)\simeq H^{n}_{Q}(\phi^{\vee})^{*}.
$$

{\em (b)} Let $\FF$ be a $\G$-graded complex of finitely generated free $S$-modules. Then
\begin{enumerate}
\item[\em (i)] $H^{i}_{P}(\FF)=0$ for $i\not= m$ and $H^{j}_{Q}(\FF)=0$ for $j\not= n$,

\item[\em (ii)]  $H^{m}_{P}(\FF)\simeq  H^{n}_{Q}((\FF)^{\vee})^{*}$.
\end{enumerate}
\end{Corollary}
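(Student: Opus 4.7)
The plan is to bootstrap Lemma \ref{dualmap}, which handles a single monomial map in a canonical twist, to the full statement. For part (a), the two key reductions are $K$-linearity and compatibility with grading shifts. Write $f = \sum c_{\alpha\beta} x^\alpha y^\beta$ as a finite sum of monomials, each of degree $a - b$, so that $\phi = \sum c_{\alpha\beta} \psi_{\alpha\beta}$ where $\psi_{\alpha\beta}\: S(-a) \to S(-b)$ is multiplication by $x^\alpha y^\beta$. Since $H^m_P$, $H^n_Q$, the transpose $\phi\mapsto\phi^\vee$, and the $K$-dual $(-)^*$ are all $K$-linear on morphisms, it suffices to verify the identity for each $\psi_{\alpha\beta}$ separately. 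Now $\psi_{\alpha\beta}$ differs from the canonical map $\phi_{\alpha\beta}$ of Lemma \ref{dualmap} only by a common shift of source and target (the common shift is forced precisely by $a - b = \deg(x^\alpha y^\beta)$), and both local cohomology and the dualities commute with grading shift; hence the isomorphism of Lemma \ref{dualmap} transports and yields (a).

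Part (b)(i) follows from the fact that $x_1,\ldots,x_m$ (respectively $y_1,\ldots,y_n$) is a regular sequence of length $m$ (respectively $n$) generating $P$ (respectively $Q$), so $H^i_P(S(-c)) = 0$ for $i \neq m$ and $H^j_Q(S(-c)) = 0$ for $j \neq n$ for every shift $c$. Each $\mathcal F_j$ is a finite direct sum of such shifts, so these vanishings are preserved termwise, and therefore hold for the complexes $H^i_P(\mathcal F)$ and $H^j_Q(\mathcal F)$ obtained by applying $H^i_P$ and $H^j_Q$ termwise.

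For part (b)(ii), applying $H^m_P$ termwise to $\mathcal F$ yields a complex with terms $H^m_P(\mathcal F_j)$ and differentials $H^m_P(\partial_j)$; similarly the termwise application of $H^n_Q((-)^\vee)^*$ yields a complex whose terms and differentials are the $K$-duals of $H^n_Q(\mathcal F_j^\vee)$ and $H^n_Q(\partial_j^\vee)$. Part (a), applied entry by entry to the matrices of the differentials, produces term isomorphisms (the maps $d$ of Lemma \ref{dualmap} extended via direct sum and shift) that intertwine the two sets of differentials; assembling them gives the desired isomorphism of complexes. The only real obstacle here is the notational bookkeeping of the grading shifts in passing from one term of $\mathcal F$ to the next---the substantive mathematical content is entirely carried by Lemma \ref{dualmap}.
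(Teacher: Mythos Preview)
Your argument is correct and matches the paper's intent: the paper simply says ``as an immediate consequence we obtain'' and leaves the details implicit, while you have spelled out precisely those details---linearity plus shift-compatibility to reduce (a) to Lemma~\ref{dualmap}, the regular-sequence vanishing for (b)(i), and the entrywise application of (a) via the fixed isomorphism $d$ for (b)(ii). There is nothing to add.
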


As the main result of this section we have

\begin{Theorem}
\label{mainduality}
 Assume that $K$ is Noetherian, the $\G$-grading of $S$ is $P$-sharp
(equivalently $Q$-sharp) and $M$ is a finitely generated $\G$-graded $S$-module. Set $\omega_{S/K}:=S(-\s)$.
Let $\FF$ be a minimal $\G$-graded $S$-resolution of $M$. Then,
\begin{enumerate}
\item[{\em (a)}]  For all $i$, there is a functorial isomorphism
$$
H^{i}_{P}(M)\simeq
H_{m-i}(H^{m}_{P}(\FF)).
$$

\item[{\em (b)}] There is a convergent $\G$-graded spectral sequence,
$$
H^{i}_{Q}(\Ext^{j}_{S}(M,\omega_{S/K}))\Rightarrow
H^{i+j-n}(H^{m}_{P}(\FF)^{*}).
$$
In particular, if $K$ is a field, there  is a convergent $\G$-graded spectral sequence,
$$
H^{i}_{Q}(\Ext^{j}_{S}(M,\omega_{S}))\Rightarrow
H^{\dim S-(i+j)}_{P}(M)^*.
$$
\end{enumerate}
\end{Theorem}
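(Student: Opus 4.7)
The plan is a double-complex argument based on the \v{C}ech complexes $\mathcal{C}_P$ and $\mathcal{C}_Q$, with Corollary \ref{basicduality} supplying the vanishing and duality that collapse the relevant spectral sequences. A minimal $\Gamma$-graded $S$-resolution $\mathbb{F}$ of $M$ consists of $\Gamma$-graded finitely generated free modules (with each graded component finite by $P$-sharpness), so the duality $\mathbb{F} \mapsto \mathbb{F}^\vee$ yields a complex of the same sort, and Corollary \ref{basicduality}(b) applies to either.

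For part (a), I would form the double complex $\mathcal{C}_P \otimes_S \mathbb{F}$ and extract its total cohomology two ways. Filtering by rows (resolution direction first) and using flatness of the localizations $\mathcal{C}_P^p$ gives a spectral sequence that degenerates on $\mathcal{C}_P \otimes_S M$, whose cohomology is $H^i_P(M)$. Filtering by columns invokes Corollary \ref{basicduality}(b)(i): each $\mathbb{F}_j$ is finitely generated free, so $H^p_P(\mathbb{F}_j)=0$ for $p\ne m$; the $E_1$ page is concentrated in column $m$, and the spectral sequence collapses to the cohomology of the single complex $H^m_P(\mathbb{F})$, producing $H_{m-i}(H^m_P(\mathbb{F}))$ in total degree $i$. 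Equating the two abutments proves (a).

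For part (b), run the parallel argument on $\mathcal{C}_Q \otimes_S \mathbb{F}^\vee$, where $\mathbb{F}^\vee$ has cohomology $\Ext^j_S(M, \omega_{S/K})$. Filtering by rows and using flatness of $\mathcal{C}_Q^p$ produces the spectral sequence with $E_2^{i,j} = H^i_Q(\Ext^j_S(M, \omega_{S/K}))$, which is the $E_2$-page stated in the theorem. Filtering by columns, Corollary \ref{basicduality}(b)(i) forces $H^p_Q(\mathbb{F}_j^\vee)=0$ for $p\ne n$, so the total cohomology reduces to $H^{*-n}(H^n_Q(\mathbb{F}^\vee))$. Now I would apply Corollary \ref{basicduality}(b)(ii) with the roles of $P,Q$ and $\mathbb{F},\mathbb{F}^\vee$ swapped (using $(\mathbb{F}^\vee)^\vee\simeq\mathbb{F}$ for finitely generated free modules) to identify $H^n_Q(\mathbb{F}^\vee)$ with $H^m_P(\mathbb{F})^*$, giving the claimed abutment $H^{i+j-n}(H^m_P(\mathbb{F})^*)$. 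When $K$ is a field, the ``in particular'' clause follows by dualizing (a): exactness of $\Hom_K(-,K)$ gives $(H^i_P(M))^* \simeq H^{m-i}(H^m_P(\mathbb{F})^*)$, and substituting with $i = \dim S - (i+j)$ (using $\dim S = m+n$) converts the abutment into $(H^{\dim S - (i+j)}_P(M))^*$.

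The main obstacle is careful bookkeeping: tracking the grading shifts through $(-)^\vee$ and $(-)^*$ (so that the spectral sequence has the precise $\Gamma$-grading claimed) and verifying convergence when $\mathbb{F}$ is unbounded, which can occur if $K$ has infinite global dimension. Convergence itself is safe, because $\mathcal{C}_P$ and $\mathcal{C}_Q$ are bounded complexes, so each total degree of the double complex is a finite direct sum and both filtrations have bounded length in each total degree; $P$-sharpness also guarantees that every $\Gamma$-graded component appearing is a finitely generated $K$-module, so the graded slices are well-behaved. The more subtle point is checking that the isomorphism of Corollary \ref{basicduality}(b)(ii) is natural enough to be applied at the level of complexes of free modules and not merely term-by-term, which amounts to promoting the construction of the pairing $d_\gamma$ in Lemma \ref{dualmap} to a morphism of complexes.
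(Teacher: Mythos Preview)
Your approach is essentially identical to the paper's: for (a) you run the two spectral sequences of the double complex $\mathcal{C}_P\otimes_S\mathbb{F}$ and collapse one via Corollary~\ref{basicduality}(b)(i); for (b) you do the same with $\mathcal{C}_Q\otimes_S\mathbb{F}^\vee$, identifying $H^n_Q(\mathbb{F}^\vee)\simeq H^m_P(\mathbb{F})^*$ via Corollary~\ref{basicduality}(b)(ii), and then invoke (a) when $K$ is a field. The paper's proof is terser but follows exactly this line; your additional remarks on convergence and naturality are valid refinements rather than departures.
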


\begin{proof} Claim (a) is an immediate consequence of  Corollary \ref{basicduality} via the $\Gamma$-graded spectral sequence $H_{p-i}(H^{p}_{P}(\FF ))\Rightarrow H^{i}_{P}(M)$. For (b), the two spectral sequences arising from the double
complex ${\mathcal C}_{Q}\FF^{\vee}$ have as second terms
respectively
${'E}_{2}^{ij}=H^{i}_{Q}(\Ext^{j}_{S}(M,\omega_{S/K}))$,
${''E}_{2}^{ij}=0$ for $i\not= n$ and ${''E}_{2}^{nj}=
H^{j}(H^{n}_{Q}(\FF^{\vee}))\simeq
H^{j}(H^{m}_{P}(\FF)^{*})$. If further $K$ is a field,
$H^{j}(H^{m}_{P}(\FF)^{*})\simeq (H_{j}(H^{m}_{P}(\FF)))^{*}
\simeq H^{m-j}_{P}(\FF)^{*}$.
\end{proof}

\begin{Corollary}
\label{gamma}
 Under the hypotheses of the theorem, if $K$ is a field, then for
any $\g\in\G$, there are convergent spectral sequences of finite dimensional $K$-vector spaces
$$
H^{i}_{Q}(\Ext^{j}_{S}(M,\omega_{R}))_{\g}\Rightarrow
H^{\dim S-(i+j)}_{P}(M)_{-\g},
$$
$$
H^{i}_{P}(\Ext^{j}_{S}(M,\omega_{R}))_{\g}\Rightarrow
H^{\dim S-(i+j)}_{Q}(M)_{-\g}.
$$
\end{Corollary}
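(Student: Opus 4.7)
The plan is to specialize Theorem \ref{mainduality}(b) to a single graded component and to exploit the manifest $P\leftrightarrow Q$ symmetry of the setup for the second spectral sequence, while tracking finite-dimensionality over $K$ throughout.

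For the first spectral sequence I would take the $\gamma$-graded strand of the convergent $\Gamma$-graded spectral sequence provided by Theorem \ref{mainduality}(b). Term by term this yields a spectral sequence with $E_{2}^{ij}=H^{i}_{Q}(\Ext^{j}_{S}(M,\omega_{S}))_{\gamma}$ abutting to $(H^{\dim S-(i+j)}_{P}(M)^{*})_{\gamma}=\Hom_{K}(H^{\dim S-(i+j)}_{P}(M)_{-\gamma},K)$. Finite-dimensionality of every term is forced by the sharpness hypothesis: a minimal $\Gamma$-graded free $S$-resolution $\FF$ of $M$ has each $\FF_{i}$ a finite direct sum of twists $S(-a_{i\ell})$, so $\Ext^{j}_{S}(M,\omega_{S})$ is a finitely generated $\Gamma$-graded $S$-module, and the remark following Lemma \ref{grading} gives that each $\Gamma$-component of $H^{i}_{Q}(\Ext^{j}_{S}(M,\omega_{S}))$ is a finite $K$-module; the same reasoning bounds each $H^{\dim S-(i+j)}_{P}(M)_{-\gamma}$.

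For the second spectral sequence I would argue by symmetry: the entire architecture of Section~1 --- the sharpness criterion (Lemma \ref{grading}), the duality pairing (Lemma \ref{dualmap}), Corollary \ref{basicduality}, and the double-complex argument of Theorem \ref{mainduality}(b) --- is invariant under the swap $(P,x_{1},\ldots,x_{m})\leftrightarrow (Q,y_{1},\ldots,y_{n})$. Re-running the proof of Theorem \ref{mainduality}(b) with this relabelling --- applying $\mathcal{C}_{P}$ to $\FF^{\vee}$ and invoking the transposed form $H^{n}_{Q}(\FF)\simeq H^{m}_{P}(\FF^{\vee})^{*}$ of Corollary \ref{basicduality}(b) --- produces a convergent spectral sequence $H^{i}_{P}(\Ext^{j}_{S}(M,\omega_{S}))\Rightarrow H^{\dim S-(i+j)}_{Q}(M)^{*}$, whose $\gamma$-graded strand is the second display.

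The only subtle point I see is the passage from the spectral sequence of the theorem, which abuts to $H^{\bullet}_{P}(M)^{*}$, to the corollary's assertion that it abuts to $H^{\bullet}_{P}(M)_{-\gamma}$ itself. This is where finite-dimensionality is decisive: once every abutment is a finite-dimensional $K$-vector space, it is non-canonically isomorphic to its $K$-dual, so up to this harmless identification the two convergence statements coincide. With that in place, the rest of the argument is formal.
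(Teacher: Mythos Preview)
Your proposal is correct and matches the paper's approach: the paper gives no explicit proof for this corollary, treating it as immediate from Theorem~\ref{mainduality}(b) by passing to a fixed $\Gamma$-component, invoking the evident $P\leftrightarrow Q$ symmetry for the second sequence, and using sharpness for finite-dimensionality. Your observation about the one genuinely subtle point --- that the abutment in the theorem carries a $(\ )^{*}$ which the corollary suppresses via the non-canonical identification $V\cong V^{*}$ for finite-dimensional $V$ --- is exactly right, and indeed the $(\ )^{*}$ reappears when the paper applies this corollary in Corollary~\ref{iso}.
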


We now consider the special case that $\Gamma=\ZZ^2$, $S:=K[x_1,\ldots ,x_m,y_1,\ldots ,y_n]$ with
$\deg (x_i)=(1,0)$ and $\deg (y_j)=(d_j,1)$ with $d_j\geq 0$. Set $T:=K[x_1,\ldots ,x_m]$ and let $M$ be a  $\Gamma$-graded $S$-module. We view $M$ as a $\ZZ$-graded module by defining $M_k=\Dirsum_j M_{(j,k)}$. Observe that each $M_k$ itself is a graded $T$-module with $(M_k)_j=M_{(j,k)}$ for all $j$.
We also note that $H^i_P(M)_k\iso H^i_{P_0}(M_k)$,  as can been seen from the definition of local cohomology using the \v{C}ech complex. Here $P_0=(x_1,\ldots,x_m)$ is the graded maximal ideal of $T$.

\begin{Corollary}
\label{iso} With the notation introduced, let $s:= \dim S=m+n$ and $d:=\dim M$. Then

\begin{enumerate}
\item[{\em (a)}] $H^0_P(\Ext^{s-d}_{S}(M,\omega_S))\iso   H_Q^{d}(M)^*$ for any $k$,
\item[{\em (b)}]  there is an exact sequence 
$$
0\ra
H^{1}_{P}(\Ext^{s-d}_{S}(M,\omega_S))\ra H^{d-1}_{Q}(M)^{*}\ra H^{0}_{P}(\Ext^{s-d+1}_{S}(M,\omega_S)).
$$
\item[{\em (c)}]  Let $i\geq 2$. If $\Ext^j_{S}(M,\omega_S)$ is annihilated by a power of $P$ for all $s-d<j<s-d+i$,
then there is an exact sequence
$$
\Ext^{s-d+i-1}_{S}(M,\omega_S)\ra
H^{i}_{P}(\Ext^{s-d}_{S}(M,\omega_S))\ra H^{d-i}_{Q}(M)^{*}\ra H^{0}_{P}(\Ext^{s-d+i}_{S}(M,\omega_S)).
$$
\end{enumerate}

In particular, if  $\Ext^j_{S}(M,\omega_S)$ has finite length for all $s-d<j\leq s-d+i_0$, for some integer $i_0$, then
\[
H^i_{P_0}(\Ext^{s-d}_{S}(M,\omega_S)_k)\iso  (H_Q^{d-i}(M)_{-k})^* \quad \text {for all $i\leq i_0$ and $k\gg 0$.}
\]

Consequently, if $M$ is a generalized Cohen-Macaulay module (i.e.\ $\Ext^{s-i}_{S}(M,\omega_S)$ has finite length for all $i\neq d$), and if we set $N=\Ext^{s-d}_{S}(M,\omega_S)$, then
$$H^i_{P_0}(N_k)\iso (H_Q^{d-i}(M)_{-k})^* \quad \text{for all $i$ and all $k\gg 0$.}$$
\end{Corollary}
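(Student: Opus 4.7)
The plan is to base everything on the second spectral sequence of Corollary~\ref{gamma},
$$
E_2^{i,j} = H^{i}_{P}(\Ext^{j}_{S}(M,\omega_{S})) \;\Longrightarrow\; H^{s-(i+j)}_{Q}(M)^{*},
$$
computed in each fixed graded degree. Since $S$ is Gorenstein of dimension $s$ and $\dim M=d$, one has $\Ext^{j}_{S}(M,\omega_{S})=0$ for $j<s-d$, so the $E_2$-page is supported in the quadrant $i\geq 0$, $j\geq s-d$. On each anti-diagonal $i+j=s-d+k$ of small $k$ only a few entries can be nonzero, and the whole proof is organised around pinning down those entries and the few relevant differentials.

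For (a), the anti-diagonal $i+j=s-d$ contains only $E_2^{0,s-d}$, which is a permanent cycle since every possible incoming or outgoing differential lands in a zero region; it gives the entire filtration of the abutment, so $H^{0}_{P}(\Ext^{s-d}_{S}(M,\omega_{S}))\iso H^{d}_{Q}(M)^{*}$. For (b), the anti-diagonal $i+j=s-d+1$ has only the two entries $(1,s-d)$ and $(0,s-d+1)$; the entry $(1,s-d)$ survives unchanged to $E_\infty$ (its would-be differentials land in $j<s-d$ or originate in $i<0$), and $E_\infty^{0,s-d+1}$ is simply a subspace of $E_2^{0,s-d+1}$. The filtration short exact sequence
$$
0\to E_\infty^{1,s-d}\to H^{d-1}_{Q}(M)^{*}\to E_\infty^{0,s-d+1}\to 0
$$
together with the inclusion $E_\infty^{0,s-d+1}\hookrightarrow H^{0}_{P}(\Ext^{s-d+1}_{S}(M,\omega_{S}))$ gives the four-term exact sequence of (b).

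Part (c) is where the $P$-torsion hypothesis enters. On the anti-diagonal $i'+j'=s-d+i$ the intermediate entries have $1\leq i'\leq i-1$ and $s-d<j'<s-d+i$; by hypothesis the corresponding $\Ext^{j'}_S(M,\omega_S)$ are $P$-torsion, so their higher $H^{\geq 1}_P$ vanish and these $E_2$-entries are zero. Only $(i,s-d)$ and $(0,s-d+i)$ survive. A page-by-page check (tedious but mechanical, driven again by Ext-vanishing in $j<s-d$ and $P$-torsion in the strip) shows that neither entry carries a nontrivial differential until page $i$, and at that page the unique interesting differential is
$$
d_i\colon E_i^{0,s-d+i-1}=\Ext^{s-d+i-1}_{S}(M,\omega_{S})\longrightarrow E_i^{i,s-d}=H^{i}_{P}(\Ext^{s-d}_{S}(M,\omega_{S})),
$$
the left-hand identification using the $P$-torsion of $\Ext^{s-d+i-1}_{S}(M,\omega_{S})$. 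Its cokernel is $E_\infty^{i,s-d}$, while $E_\infty^{0,s-d+i}$ is still a subspace of $H^{0}_{P}(\Ext^{s-d+i}_{S}(M,\omega_{S}))$; assembling the filtration short exact sequence of the abutment yields the four-term exact sequence of (c). The main obstacle here is keeping the page-by-page bookkeeping straight and confirming that no unexpected differential spoils the computation, but the two vanishing inputs together kill every differential that could matter.

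Finally, for the ``in particular'' and ``Consequently'' assertions, I pass to the $k$-th $\ZZ$-graded component using $H^{i}_{P}(N)_{k}=H^{i}_{P_0}(N_{k})$. Finite length of $\Ext^{j}_{S}(M,\omega_{S})$ for $s-d<j\leq s-d+i_0$ implies both that these modules are $P$-torsion (so (c) applies for all $i\leq i_0$) and that their degree-$k$ components vanish for $k\gg 0$. Substituting into the exact sequence of (c) kills the outer terms, leaving the isomorphism $H^{i}_{P_0}(N_{k})\iso (H^{d-i}_{Q}(M)_{-k})^{*}$ for all $i\leq i_0$ and $k\gg 0$. The generalized Cohen--Macaulay case is just this statement applied with $i_0$ arbitrarily large.
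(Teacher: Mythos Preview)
Your proof is correct and follows exactly the approach the paper intends: the paper's own proof simply says that (a), (b), (c) are ``direct consequences of Corollary~\ref{gamma}'' and that the final assertions follow because the relevant $\Ext$ modules vanish in bidegree $(\ell,k)$ for $k\gg 0$. You have faithfully unpacked precisely that spectral-sequence argument (edge behavior along the line $j=s-d$, vanishing of intermediate columns under the $P$-torsion hypothesis, and the page-$i$ differential producing the four-term sequence), so there is nothing to add or correct.
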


\begin{proof} (a), (b) and (c) are direct consequences of Corollary \ref{gamma}.  For the application, notice that
if $\g=(\ell,k)\in \Gamma$ with $k\gg 0$ one has $\Ext^{j}_{S}(M,\omega_S)_\g=0$ for all $s-d<j\leq s-d+i_0$. Therefore, for such $\g$, the desired conclusion follows.
\end{proof}

A typical example to which  this situation applies is  the Rees algebra of a graded ideal $I$ in the standard graded polynomial ring $T=K[x_1,\ldots, x_m]$. Say, $I$  is generated be the homogeneous polynomials  $f_1,\ldots, f_n$ with $\deg f_j=d_j$ for $j=1,\ldots,n$. Then the Rees algebra ${\mathcal R}(I)\subset T[t]$ is generated the elements $f_jt$. If we set $\deg f_jt=(d_j,1)$ for all $j$ and $\deg x_i=(1,0)$ for all $i$, then ${\mathcal R}(I)$ becomes a $\Gamma$-graded $S$-module  via the $K$-algebra homomorphism $S\to {\mathcal R}(I)$ with $x_i\mapsto x_i$ and $y_j\mapsto f_jt$.

According to this definition we have ${\mathcal R}(I)_k=I^k$ for all $k$.

Since $\dim {\mathcal R}(I)=m+1$, the module $\omega_{{\mathcal R}(I)}=\Ext_S^{n-1}({\mathcal R}(I),\omega_S)$ is the canonical module of ${\mathcal R}(I)$ (in the sense of \cite[5.\ Vortrag]{HK}).  Recall that if a ring $R$ is a finite $S$-module of dimension $m+1$, the natural finite map $R\ra  \Hom(\omega_R,\omega_R)\iso \Ext_S^{n-1}(\omega_R,\omega_S)
%\iso H_\mm^{m+1}(\omega_R)^*
$ is an isomorphism if
and only if $R$  is $S_2$.  Thus in combination with Corollary \ref{iso} we obtain

\begin{Corollary}
\label{rees}
Let $R:={\mathcal R}(I)$. Suppose that  $R_\pp$ is Cohen-Macaulay for all $\pp\neq (\mm,R_+)$ where $\mm=(x_1,\ldots,x_m)$ and $R_+=\Dirsum_{k>0}I^kt^k$. Then
\[
H_\mm^{i}(I^k)\iso (H_{R_+}^{m+1-i}(\omega_R)_{-k})^* \quad\text{for all $i$ and all $k\gg 0$.}
\]
\end{Corollary}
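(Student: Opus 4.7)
The plan is to apply Corollary \ref{iso} to $M := \omega_R$, which has dimension $m+1$, so that $s - d = n - 1$ and the module $N := \Ext^{n-1}_S(\omega_R,\omega_S)$ produced by the corollary can, under an $S_2$-type condition, be identified in large degree with $R$; the conclusion of Corollary \ref{iso} will then specialize to the stated duality.

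For the generalized Cohen--Macaulay hypothesis, observe that the Rees algebra $R=\Rees(I)\subset T[t]$ is a domain, so it is equidimensional of dimension $m+1$ and its defining ideal $J=\ker(S\twoheadrightarrow R)$ has pure height $n-1$. Let $\qq$ be a prime of $S$ other than the homogeneous maximal ideal $\nn_S$. If $\qq\not\supset J$ then $(\omega_R)_\qq=0$; otherwise let $\pp=\qq/J$ be the corresponding prime of $R$, so $\pp\ne(\mm,R_+)$ and $R_\pp$ is Cohen--Macaulay by hypothesis. Then $(\omega_R)_\qq=\omega_{R_\pp}$ is maximal Cohen--Macaulay over $R_\pp$, and Auslander--Buchsbaum over the regular local ring $S_\qq$ gives $\pd_{S_\qq}((\omega_R)_\qq)=\dim S_\qq-\dim R_\pp=n-1$, whence $\Ext^j_{S_\qq}((\omega_R)_\qq,(\omega_S)_\qq)=0$ for $j\ne n-1$. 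Thus for each $j\ne n-1$, the module $\Ext^j_S(\omega_R,\omega_S)$ is supported only at $\nn_S$, hence has finite length---precisely the generalized Cohen--Macaulay condition required in Corollary \ref{iso}.

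That corollary, applied with $M=\omega_R$, now yields for all $i$ and all $k\gg 0$
$$
H^i_\mm(N_k)\iso\bigl(H^{m+1-i}_{R_+}(\omega_R)_{-k}\bigr)^{*}.
$$
By the remark recalled just before Corollary \ref{rees}, $N\iso\Hom_R(\omega_R,\omega_R)$, and the natural map $R\to N$ is an isomorphism iff $R$ is $S_2$; applying this criterion to each localization $R_\pp$ shows that the kernel and cokernel of $R\to N$ vanish at every $S_2$ prime of $R$, and so are supported on the non-$S_2$ locus. Under our hypothesis $R$ is Cohen--Macaulay (hence $S_2$) at every prime distinct from $(\mm,R_+)$, so the kernel and cokernel are finitely generated $R$-modules supported at the single maximal ideal $(\mm,R_+)$ and therefore of finite length. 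Since a $\Gamma$-graded module of finite length has only finitely many nonzero graded pieces, $N_k=R_k=I^k$ for all $k\gg 0$, and substituting this into the displayed isomorphism yields the corollary.

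The main technical step is the second paragraph: propagating the hypothesis on $R$ to a finite-length statement for $\Ext^j_S(\omega_R,\omega_S)$ at $j\ne n-1$ rests on the equidimensionality of $R$ (to force $\height(JS_\qq)=n-1$ uniformly at every $\qq\supset J$) together with the Auslander--Buchsbaum passage from the depth of $\omega_{R_\pp}$ to the projective dimension of $(\omega_R)_\qq$ over $S_\qq$; once this is granted, the identification $N_k=I^k$ for $k\gg 0$ is a formal consequence of the $S_2$-criterion together with the finite length of the graded $S_2$-ification defect.
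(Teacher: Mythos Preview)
Your proof is correct and follows essentially the same approach as the paper: apply Corollary~\ref{iso} with $M=\omega_R$, use the Cohen--Macaulayness of $R_\pp$ away from the maximal ideal to force the higher $\Ext^j_S(\omega_R,\omega_S)$ to have finite length, and then identify $N_k$ with $I^k$ for $k\gg 0$ via the $S_2$-criterion for $R\to\Hom_R(\omega_R,\omega_R)$. The paper's proof is much terser---it records only that $(\omega_R)_\pp$ is Cohen--Macaulay and that the cokernel of $R\to R'$ has finite length---whereas you have made explicit the Auslander--Buchsbaum step and the equidimensionality argument needed to pin $\dim S_\qq-\dim R_\pp$ at $n-1$; but the skeleton is the same.
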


\begin{proof} Since $\omega_R$ localizes, the conditions imply that $(\omega_R)_\pp$ is Cohen-Macaulay for all $\pp\neq (\mm, R_+)$. Hence the natural into map $R\ra R':=\Ext_S^{n-1}(\omega_R,\omega_S)$ has a cokernel of finite length. In particular, $R'_k=R_k=I^k$ for $k\gg 0$. Thus Corollary \ref{iso} applied to $M=\omega_R$ gives the desired conclusion.
\end{proof}

\begin{Remark}
{\em Let $R:={\mathcal R}(I)$. If the cokernel of $R\ra \Hom(\omega_R,\omega_R)$ is annihilated by a power of $R_+$ (in other words, the
blow-up is $S_2$, as a projective scheme over ${\rm Spec}(T)$), then $R'_k=I^k$ for $k\gg 0$ and therefore one has
an exact sequence
\[
0\ra H_\mm^{0}(T/I^k) \ra (H_{R_+}^{m}(\omega_R)_{-k})^* \ra H_\mm^{0}(\Ext^n_{S}(\omega_R,\omega_S)_k)\ra H_\mm^{1}(T/I^k) \ra (H_{R_+}^{m-1}(\omega_R)_{-k})^*
\]
for such a $k$.}
\end{Remark}

\section{A method of constructing examples}\label{Section2}
Suppose that $R=\bigoplus_{i,j\ge0} R_{ij}$ is a standard bigraded algebra over a ring $K=R_{00}$.
Define $R^i=\bigoplus_{j\ge 0}R_{ij}$ and $R_j=\bigoplus_{i\ge 0}R_{ij}$. Define ideals
$P=\bigoplus_{i>0}R^i$ and $Q=\bigoplus_{j>0}R_j$ in $R$. Suppose that $M=\bigoplus_{ij\in{\bf Z}}M_{ij}$ is
a finitely generated, bigraded $R$-module. Define $M^i=\bigoplus_{j\in{\bf Z}}M_{ij}$ and
$M_j=\bigoplus_{i\in{\bf Z}}M_{ij}$. $M^i$ is a graded $R^0$-module and $M_j$ is a graded $R_0$-module. Let $Q_0=R_{01}R^0$, so that $Q=Q_0R$. Let $P_0=R_{10}R_0$ so that $P=R_{10}R$. We have $K$
module isomorphisms
$$
H^l_Q (M)_{m,n}\cong H^l_{Q_0}(M^m)_n
$$
for $m,n\in{\bf Z}$. Let $\widetilde{M^m}$ be the sheafification of the graded $R^0$-module $M^m$
on $\mbox{Proj}(R^0)$. We have $K$ module isomorphisms
$$
H^l_{Q_0}(M^m)_n\cong H^{l-1}(\mbox{Proj}(R^0),\widetilde{M^m}(n))
$$
for $l\ge 2$ and exact sequences
$$
0\rightarrow H^0_{Q_0}(M^m)_n\rightarrow (R^m)_n=R_{m,n}\rightarrow
H^0(\mbox{Proj}(R^0),\widetilde{M^m}(n))\rightarrow H^1_{Q_0}(M^m)_n\rightarrow 0.
$$
We have similar formulas for the calculation of $H^l_P(M)$.

Now  assume that $X$ is a projective scheme over $K$ and ${\mathcal F}_1$ and ${\mathcal F}_2$ are
very ample line bundles on $X$. Let
$$
R_{m,n}=\Gamma(X,{\mathcal F}_1^{\otimes m}\otimes{\mathcal F}_2^{\otimes n}).
$$
We require that $R=\bigoplus_{m,n\ge 0}R_{m,n}$ be a standard bigraded $K$-algebra. We have
$$
X\cong \mbox{Proj}(R_0)\cong \mbox{Proj}(R^0).
$$
The  sheafification of the graded $R^0$-module $R^m$ on $X$ is $\widetilde{R^m}={\mathcal
F}_1^{\otimes m}$, and  the sheafification of the graded $R_0$-module $R_n$ on $X$ is $\widetilde{R_n}\cong {\mathcal F}_2^{\otimes n}$ (Exercise II.5.9 \cite{Ha}).

For $l\ge 2$ we have bigraded isomorphisms
$$
H^l_Q(R)\cong\bigoplus_{m\ge 0}H^l_{Q_0}(R^m)_n \cong\bigoplus_{m\ge 0,n\in{\bf Z}}H^{l-1}(X,{\mathcal
F}_1^{\otimes m}\otimes{\mathcal F}_2^{\otimes n}).
$$
Viewing $R$ as a graded $R_0$ algebra, we thus   have graded isomorphisms
\begin{equation}\label{eqN15}
H^l_Q(R)_n\cong\bigoplus_{m\ge 0}H^{l-1}(X,{\mathcal F}_1^{\otimes m}\otimes{\mathcal F}_2^{\otimes n}),
\end{equation}
for $l\ge 2$ and $n\in{\bf Z}$.
Let $d=\mbox{dim}(R)=\mbox{dim}(X)+2$.

We now further assume that $K$ is an algebraically closed field and $X$ is a nonsingular $K$
variety. Let
$$
V=\text{\bf P}({\mathcal F}_1\oplus{\mathcal F}_2),
$$
a projective space bundle over $X$ with projection $\pi:V\rightarrow X$. Since ${\mathcal
F}_1\oplus{\mathcal F}_2$ is an ample bundle on $X$, ${\mathcal O}_V(1)$ is ample on $V$.
 Since
$$
R\cong\bigoplus_{t\ge 0}\Gamma(V,{\mathcal O}_V(t))
$$
with
$$
\Gamma(V,{\mathcal O}_V(t))\cong\Gamma(X,S^t({\mathcal F}_1\oplus{\mathcal F}_2))\cong\bigoplus_{i+j=t}R_{ij}
$$
 and $R$ is generated in degree 1 with
respect to this grading, ${\mathcal O}_V(1)$ is very ample on $V$ and $R$ is the homogeneous
coordinate ring of the nonsingular projective variety $V$, so that $R$ is generalized Cohen
Macaulay (all local cohomology modules $H_{R_+}^i(R)$ of $R$ with respect to the maximal bigraded
ideal $R_+$ of $R$ have finite length for $i<d$). We further have that $V$ is projectively normal
by this embedding (Exercise II.5.14 \cite{Ha}) so that $R$ is normal.

\section{Strange behavior of local cohomology}\label{Section3}

In \cite{CH}, we
constructed the following example of failure of tameness of local cohomology.
In the example, $R_0$ has dimension 3, which is the lowest possible for
failure of tameness \cite{Br}.

\begin{Theorem}\label{TheoremA0}
Suppose that $K$ is an algebraically closed field. Then there
exists a normal standard graded $K$-algebra $R_0$
  with
$\dim(R_0)=3$, and a normal standard graded $R_0$-algebra $R$ with $\dim(R)=4$ such
that  for $j\gg 0$,
$$
\dim_K(H^2_{Q}(\omega_R)_{-j})= \left\{\begin{array}{ll}
2&\text{ if $j$ is even,}\\
0&\text{ if $j$ is odd,}\\
\end{array}\right.
$$
where $\omega_R$ is the canonical module of $R$, $Q=\bigoplus_{n>0}R_n$.
\end{Theorem}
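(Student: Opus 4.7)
The strategy is to realize $R$ geometrically via the construction of Section \ref{Section2}: choose a smooth projective surface $X$ over $K$ together with two very ample line bundles $\mathcal{F}_1,\mathcal{F}_2$ on $X$ such that $R=\bigoplus_{m,n\ge0}\Gamma(X,\mathcal{F}_1^{\otimes m}\otimes\mathcal{F}_2^{\otimes n})$ is standard bigraded. Then $R_0=\bigoplus_{n\ge0}\Gamma(X,\mathcal{F}_2^{\otimes n})$ is the homogeneous coordinate ring of $X$ under the embedding by $|\mathcal{F}_2|$, hence a $3$-dimensional normal standard graded $K$-algebra, and $R$ is the homogeneous coordinate ring of the smooth projective threefold $V=\mathbf{P}(\mathcal{F}_1\oplus\mathcal{F}_2)$, hence $4$-dimensional, normal, and generalized Cohen--Macaulay.

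To compute $H^2_Q(\omega_R)_{-j}$ I would apply Corollary \ref{iso} with $M=\omega_R$. Since $R$ is $S_2$, $\Ext_S^{s-d}(\omega_R,\omega_S)\simeq R$, and the finite-length hypotheses of the corollary are implied by the generalized Cohen--Macaulayness of $R$. For $j\gg0$ this yields a $K$-linear isomorphism
\[
\bigl(H^2_Q(\omega_R)_{-j}\bigr)^{*}\iso H^2_{P_0}(R_j).
\]
Now $R_j=\bigoplus_m R_{m,j}$ is a graded $R_0$-module whose sheafification on $\mathrm{Proj}(R_0)\cong X$ (embedded by $|\mathcal{F}_1|$) is $\mathcal{F}_2^{\otimes j}$, so the analogue of formula (\ref{eqN15}) with the roles of $P$ and $Q$ swapped gives
\[
H^2_{P_0}(R_j)\iso\bigoplus_{m}H^1(X,\mathcal{F}_1^{\otimes m}\otimes\mathcal{F}_2^{\otimes j}),
\]
and Serre vanishing forces all but finitely many $m$ to contribute zero for each fixed $j$. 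Thus the desired dimension count reduces to exhibiting $X,\mathcal{F}_1,\mathcal{F}_2$ satisfying
\[
\sum_{m}\dim_K H^1(X,\mathcal{F}_1^{\otimes m}\otimes\mathcal{F}_2^{\otimes j})=\begin{cases}2,&j\text{ even},\\0,&j\text{ odd},\end{cases}\qquad j\gg0.
\]

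The main obstacle, and the content of \cite{CH}, is the geometric design step. A natural device is to introduce a nontrivial $2$-torsion line bundle $\mathcal{M}\in\mathrm{Pic}^{0}(X)$ and to incorporate $\mathcal{M}$ into $\mathcal{F}_2$, so that $\mathcal{F}_2^{\otimes j}$ acquires or loses the twist by $\mathcal{M}$ according to the parity of $j$. One then arranges the numerical classes so that, for each $j\gg0$, exactly one value of $m$ places $\mathcal{F}_1^{\otimes m}\otimes\mathcal{F}_2^{\otimes j}$ on the cohomological jumping locus where the $\mathcal{M}$-twist controls $H^1$, producing $h^1=2$ when the twist is trivial ($j$ even) and $h^1=0$ when it is nontrivial ($j$ odd). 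Surfaces with enough $2$-torsion in $\mathrm{Pic}^0$---for instance abelian surfaces, or products of an elliptic curve with another curve---provide convenient candidates.

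The remaining verifications---standard bigradedness, normality, generalized Cohen--Macaulayness with the stated dimensions, and the finite-length Ext hypotheses of Corollary \ref{iso}---all follow from the smoothness and projective normality of the embedding $V\hookrightarrow\mathbf{P}(H^0(V,\mathcal{O}_V(1)))$ established at the end of Section \ref{Section2}, once $(X,\mathcal{F}_1,\mathcal{F}_2)$ are chosen sufficiently positively.
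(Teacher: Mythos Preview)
Your proposal is correct and follows essentially the approach of \cite{CH} as recounted in Section~\ref{Section5}: reduce via bigraded duality (Corollary~\ref{iso} applied to $M=\omega_R$, using $S_2$-ness and generalized Cohen--Macaulayness of $R$) to the computation of $H^2_{P_0}(R_j)\cong\bigoplus_{m}H^1(X,\mathcal{F}_1^{\otimes m}\otimes\mathcal{F}_2^{\otimes j})$, and then take $X$ an abelian surface with a $2$-torsion class built into $\mathcal{F}_2$ so that exactly one term survives and its $h^1$ is $2$ or $0$ according to the parity of $j$---this is precisely formula~(\ref{eqN25}). One slip: in the paper's conventions $R_0=\bigoplus_{m\ge0}\Gamma(X,\mathcal{F}_1^{\otimes m})$, not $\mathcal{F}_2$; you tacitly correct this later when you write that $\mathrm{Proj}(R_0)$ is $X$ embedded by $|\mathcal{F}_1|$.
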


We first show that the above theorem is also true for the local cohomology of $R$.

\begin{Theorem}\label{TheoremN19}
Suppose that $K$ is an algebraically closed field. Then there
exists a normal standard graded $K$-algebra $R_0$
  with
$\text{dim}(R_0)=3$, and a normal standard graded $R_0$-algebra $R$ with $\text{dim}(R)=4$ such
that  for $j> 0$,
$$
\dim_K(H^2_{Q}(R)_{-j})= \left\{\begin{array}{ll}
2&\text{ if $j$ is even,}\\
0&\text{ if $j$ is odd,}\\
\end{array}\right.
$$
where  $Q=\bigoplus_{n>0}R_n$.
\end{Theorem}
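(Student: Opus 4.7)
The plan is to use the very same bigraded algebra $R$ that \cite{CH} constructs in proving Theorem \ref{TheoremA0}, and to transport the conclusion from $\omega_R$ to $R$ via the duality of Corollary \ref{iso}. The algebra $R$ of \cite{CH} fits the Section \ref{Section2} framework: there is a smooth projective surface $X$ over $K$ and two very ample line bundles $\Fc_1, \Fc_2$ on $X$ with $R=\bigoplus_{m,n\ge 0}\Gamma(X,\Fc_1^{\otimes m}\otimes\Fc_2^{\otimes n})$, and $R$ is the homogeneous coordinate ring of the nonsingular threefold $V=\PP(\Fc_1\oplus\Fc_2)$. In particular $R$ is normal, of dimension $d=4$ over $R_0$, and generalized Cohen-Macaulay; the same holds for $\omega_R$.

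The key step is the generalized Cohen-Macaulay consequence of Corollary \ref{iso} applied to $M=R$. Since $\Ext^{s-d}_S(R,\omega_S)\iso\omega_R$ and every higher Ext module has finite length, we obtain for every $i$ and every $k\gg 0$
\[
H^{i}_{P_0}((\omega_R)_k)\iso (H^{d-i}_Q(R)_{-k})^{*}.
\]
Taking $i=d-2=2$ reduces the computation of $\dim_K H^2_Q(R)_{-k}$ to that of $\dim_K H^2_{P_0}((\omega_R)_k)$ for $k\gg 0$. Applying the same corollary with $M=\omega_R$ (which is also generalized Cohen-Macaulay, and for which $\Ext^{s-d}_S(\omega_R,\omega_S)\iso R$ because $R$ is $S_2$) yields the parallel isomorphism $H^{2}_{P_0}(R_k)\iso (H^2_Q(\omega_R)_{-k})^{*}$. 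By Theorem \ref{TheoremA0} this already gives $\dim_K H^{2}_{P_0}(R_k)=2$ for even $k\gg 0$ and $0$ for odd $k\gg 0$.

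The last step is to compare $H^{2}_{P_0}((\omega_R)_k)$ with $H^{2}_{P_0}(R_k)$. Sheafifying on $X=\mathrm{Proj}(R_0)$ turns $R_k$ into the line bundle $\Fc_2^{\otimes k}$, while the formula $\omega_V\iso\pi^{*}(\omega_X\otimes\Fc_1\otimes\Fc_2)\otimes\OO_V(-2)$ for the projective bundle $V\to X$ together with the projection formula identifies the sheafification of $(\omega_R)_k$ with $\omega_X\otimes\Fc_2^{\otimes k}$ up to a shift. Thus
\[
H^{2}_{P_0}((\omega_R)_k)\iso\bigoplus_{n} H^{1}\bigl(X,\omega_X\otimes\Fc_1^{\otimes n}\otimes\Fc_2^{\otimes k}\bigr),
\quad
H^{2}_{P_0}(R_k)\iso\bigoplus_{n} H^{1}\bigl(X,\Fc_1^{\otimes n}\otimes\Fc_2^{\otimes k}\bigr),
\]
and Serre duality on the surface $X$ exhibits each summand of the first sum as the $K$-dual of $H^{1}(X,\Fc_1^{\otimes(-n)}\otimes\Fc_2^{\otimes(-k)})$, so that the two sums coincide in $K$-dimension after reindexing. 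Combining this with the outcome of the previous paragraph gives the desired periodic pattern $(2,0,2,0,\dots)$ for $\dim_K H^2_Q(R)_{-j}$, for $j\gg 0$. The remaining small values of $j$ are then dealt with by the explicit description of $R$ and $\omega_R$ in the construction.

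\textbf{Main obstacle.} The subtle point is the Serre-duality comparison: the involution $(n,k)\mapsto(-n,-k)$ exchanges the range $k\gg 0$ with the range $k\ll 0$, so one cannot match terms naively. One must check that on the specific $(X,\Fc_1,\Fc_2)$ of \cite{CH} the nonvanishing $H^{1}$-contributions are symmetric enough that the parity behaviour $(2,0,2,0,\dots)$ is preserved under this involution and the $\omega_X$ twist. Because the asserted period is $2$, any integer shift coming from the canonical twist acts trivially on parity, and the argument reduces to matching a finite number of nonvanishing $H^{1}$'s on each parity class, using the explicit geometry inherited from \cite{CH}.
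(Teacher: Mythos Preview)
Your route is far more circuitous than the paper's and, as written, has a real gap at the comparison step.

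The paper's proof is a two-line direct computation. It simply applies formula (\ref{eqN15}),
\[
\dim_K H^2_Q(R)_{-j}=\sum_{m\ge 0}h^1(X,\Fc_1^{\otimes m}\otimes\Fc_2^{\otimes(-j)})
=\sum_{m\ge 0}h^1\bigl(S,\mathcal O_S((m-j)r_2alH-jr_2D)\bigr),
\]
and reads off the answer from formula (1) of \cite{CH} (reproduced here as (\ref{eqN25})): the only nonzero term occurs at $m=j$, and it equals $2$ when $jr_2$ is even and $0$ otherwise. No duality, no $\omega_R$, and the answer is obtained for every $j>0$, not merely $j\gg 0$.

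Your detour through Corollary \ref{iso} twice is not wrong in spirit, but your Step 4 does not do what you claim. Serre duality on the surface gives
\[
h^1(X,\omega_X\otimes\Fc_1^{\otimes n}\otimes\Fc_2^{\otimes k})=h^1(X,\Fc_1^{\otimes(-n)}\otimes\Fc_2^{\otimes(-k)}),
\]
so after reindexing in $n$ you have matched $\sum_n h^1(X,\omega_X\otimes\Fc_1^{\otimes n}\otimes\Fc_2^{\otimes k})$ with $\sum_n h^1(X,\Fc_1^{\otimes n}\otimes\Fc_2^{\otimes(-k)})$, i.e.\ with the quantity at $-k$, not at $k$. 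Your ``Main obstacle'' paragraph recognizes this but the proposed resolution (``symmetric enough'', ``matching a finite number of nonvanishing $H^1$'s'') is not an argument. The fix you are missing is that in the construction of \cite{CH} the surface $X$ is an \emph{Abelian} surface, so $\omega_X\cong\mathcal O_X$; then the two sums are literally identical and no Serre-duality juggling is needed. The paper itself points this out in Section \ref{Section5} as the basis of an alternate proof for $j\gg 0$.

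Even with that correction, your argument only yields the statement for $j\gg 0$, because Corollary \ref{iso} in its generalized Cohen--Macaulay form carries the hypothesis $k\gg 0$. Your sentence ``the remaining small values of $j$ are then dealt with by the explicit description'' is a promise, not a proof; the paper's direct computation via (\ref{eqN15}) handles all $j>0$ uniformly, which is exactly why the authors chose it.
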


\begin{proof}
We  compute this directly for the $R$ of Theorem \ref{TheoremA0}  from (\ref{eqN15}) and the calculations of \cite{CH}.
Translating from the notation of this paper to the notation of \cite{CH}, we have $X=S$ is an Abelian surface,
${\mathcal F}_1={\mathcal O}_S(r_2laH)$ and ${\mathcal F}_2={\mathcal O}_S(r_2(D+alH))$.

By (\ref{eqN15}) of this paper, for $n\in{\bf N}$, we have
$$
\begin{array}{lll}
\mbox{dim}_K(H^2_Q(R)_n)&=&\sum_{m\ge 0}h^1(X,{\mathcal F}_1^{\otimes m}\otimes{\mathcal F}_2^{\otimes n})\\
&=& \sum_{m\ge 0}h^1(S,{\mathcal O}_S((m+n)r_2alH+nr_2D)).
\end{array}
$$
Formula (1) of \cite{CH} tells us that for $m,n\in{\bf Z}$,
\begin{equation}\label{eqN25}
h^1(S,{\mathcal O}_S(mH+nD))=
\left\{\begin{array}{ll}
2&\mbox{if $m=0$ and $n$ is even,}\\
0&\mbox{otherwise.}
\end{array}\right.
\end{equation}
Thus for $n<0$, we have
$$
\text{dim}_K(H^2_{Q}(R)_{n})= \left\{\begin{array}{ll}
2&\text{ if $n$ is even,}\\
0&\text{ if $n$ is odd,}\\
\end{array}\right.
$$
giving the conclusions of the theorem.
\end{proof}

The following example shows non periodic failure of tameness.

\begin{Theorem}\label{TheoremA1}
Suppose that $p$ is a prime number such that $p\equiv 2\text{ (mod) }3$ and $p\ge 11$. Then there
exists a normal standard graded $K$-algebra $R_0$ over a field $K$ of characteristic $p$ with
$\mbox{dim}(R_0)=4$, and a normal standard graded $R_0$-algebra $R$ with $\mbox{dim}(R)=5$ such
that  for $j> 0$,
$$
\dim_K(H^2_{Q}(R)_{-j})= \left\{\begin{array}{ll} 1&\text{ if $j\equiv 0\mbox{ (mod)
}(p+1),$}\\
1&\mbox{ if $j=p^t$ for some odd $t\ge 0$,}\\
0&\mbox{ otherwise,}
\end{array}\right.
$$
where  $Q=\bigoplus_{n>0}R_n$. We have $p^t\equiv -1
(\mod)  (p+1)$ for all odd $t\ge 0$.
\end{Theorem}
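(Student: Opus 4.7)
The plan is to instantiate the framework of Section \ref{Section2}: choose a nonsingular projective threefold $X$ over the algebraically closed field $K$ of characteristic $p$, together with two very ample line bundles $\mathcal{F}_1$ and $\mathcal{F}_2$ on $X$. Setting $V=\PP(\mathcal{F}_1\oplus\mathcal{F}_2)$ and $R=\bigoplus_{t\geq 0}\Gamma(V,\mathcal{O}_V(t))$ yields, as in Section \ref{Section2}, a normal standard bigraded $K$-algebra of dimension $5$ with $R_0$ of dimension $4$. Formula (\ref{eqN15}) at $l=2$ with $n=-j$ then reduces the theorem to the cohomological identity
$$
\sum_{m\geq 0}h^1\bigl(X,\mathcal{F}_1^{\otimes m}\otimes\mathcal{F}_2^{\otimes -j}\bigr)=\begin{cases} 1 & \text{if } j\equiv 0\pmod{p+1},\\ 1 & \text{if } j=p^t \text{ for some odd } t\geq 0,\\ 0 & \text{otherwise.}\end{cases}
$$

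Following the template of the proof of Theorem \ref{TheoremN19}, where $X$ was an abelian surface and the analogous sum collapsed via formula (1) of \cite{CH}, I would design $X$ so that $\mathcal{F}_1^{\otimes m}\otimes\mathcal{F}_2^{\otimes n}\cong\mathcal{O}_X(\lambda(m,n)H+\mu(n)D)$ for an ample class $H$ and an auxiliary class $D$ along a Frobenius-pathological direction, with $h^1(X,\mathcal{O}_X(\lambda H+\mu D))=0$ unless $\lambda=0$. The arithmetic hypotheses on $p$ are calibrated for this purpose: $p\equiv 2\pmod 3$ is precisely the supersingularity criterion for the $j$-invariant-zero elliptic curve $y^2=x^3+1$, so a natural candidate for $X$ is a product, or a ruled threefold, involving such a supersingular elliptic curve $E$, with $D$ pulled back from $\operatorname{Pic}^0(E)$; the bound $p\geq 11$ secures the very-ampleness conditions required by Section \ref{Section2} and rules out small-prime collisions in the intersection numbers.

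The core of the argument is then computing $h^1(X,\mathcal{O}_X(\mu(j)D))$ as a function of $j$. The periodic contribution at $j\equiv 0\pmod{p+1}$ should come from a $(p+1)$-torsion subgroup of $\operatorname{Pic}^0(E)$, whose existence is guaranteed by the supersingularity of $E$: the $h^1$ equals $1$ exactly when the relevant twist of the chosen line bundle is trivial. The isolated contributions at $j=p^t$ for odd $t$ should arise from iterated Frobenius pullback; the congruence $p^t\equiv -1\pmod{p+1}$ for odd $t$ places these twists outside the periodic family and so produces genuinely new $h^1$ classes, whereas $p^{2s}\equiv 1\pmod{p+1}$ means the even-$t$ twists land back inside the periodic family without contributing anything new. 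A Frobenius-pushforward or destabilization construction on $E$ supplies the rank-two bundle whose cohomology exhibits precisely this behaviour.

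The principal obstacle will be executing this last step cleanly: engineering the sheaf cohomology so that exactly the prescribed $j$-values contribute, producing the sporadic odd-$t$ classes without collisions with the periodic family, and vanishing all terms with $m\geq 1$. Serre vanishing, enabled by the ampleness of $\mathcal{F}_1$ and the large-$p$ hypothesis, should suffice for the last of these. Once the cohomology computation is complete, the normality, projective-normality, and generalized-Cohen-Macaulay properties of $R$ are automatic from the projective-bundle construction in Section \ref{Section2}.
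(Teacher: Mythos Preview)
Your identification of the Section \ref{Section2} framework is correct, and (\ref{eqN15}) does reduce the problem to computing $\sum_{m\ge 0}h^1(X,\mathcal{F}_1^{\otimes m}\otimes\mathcal{F}_2^{\otimes -j})$ for a suitable threefold $X$. But the proposal stops at a plan: you do not actually produce $X$, $\mathcal{F}_1$, $\mathcal{F}_2$, nor carry out the cohomology computation, and you explicitly flag the core step as ``the principal obstacle.'' That is the gap.

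The paper's construction is also rather different from the mechanism you sketch. The threefold is $X=T\times C$ with $T=E\times E$ an abelian surface (the elliptic curve $E$ need not be supersingular) and $C$ a curve of \emph{genus two}. The sporadic $p^t$ contributions do not come from Frobenius on an elliptic factor or from $(p{+}1)$-torsion in $\operatorname{Pic}^0(E)$; they are imported wholesale from an example of Cutkosky--Srinivas \cite[Section 6]{CS}, which supplies a degree-zero line bundle $\mathcal{M}$ on $C$ and a point $q\in C$ with $h^1(C,\mathcal{O}_C(q)\otimes\mathcal{M}^{\otimes n})=1$ exactly when $n=p^t$ and $0$ otherwise (this is where the hypotheses $p\equiv 2\pmod 3$ and $p\ge 11$ originate). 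Setting $a=p+1$ and choosing $\mathcal{F}_1,\mathcal{F}_2$ as explicit products of pullbacks from $T$ and $C$, the K\"unneth formula together with Lemma \ref{LemmaA1} makes $R$ standard bigraded, and the K\"unneth decomposition of $H^1$ shows that for $m\ge 0$, $n<0$ the only nonzero terms occur when $ma+n(1+a)\in\{0,1\}$: the case $ma+n(1+a)=0$ forces $a\mid n$ and gives the periodic contribution via $h^0(T,\mathcal{O}_T)=1$, while $ma+n(1+a)=1$ forces $n\equiv 1\pmod a$ and contributes $h^1(C,\mathcal{O}_C(q)\otimes\mathcal{M}^{-n})$, which by \cite{CS} is nonzero precisely when $-n=p^t$ with $t$ odd (even $t$ gives $p^t\equiv 1\pmod a$, incompatible with $n<0$, $m\ge 0$). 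Your supersingular-elliptic-curve heuristic is an interesting guess at the source of the arithmetic hypotheses, but it is not the route taken and would require an independent argument you have not supplied.
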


To establish this, we need the following simple lemma.

\begin{Lemma}\label{LemmaA1} Suppose that $C$ is  a non singular curve of genus g over an algebraically closed field $K$,
 and ${\mathcal M}$, $\mathcal N$ are line bundles on $C$. If $\text{deg}({\mathcal M})\ge 2(2g+1)$ and
$\text{deg}({\mathcal N})\ge 2(2g+1)$, then the natural map
$$
\Gamma(C,{\mathcal M})\otimes \Gamma(C,{\mathcal N})\rightarrow \Gamma(C,{\mathcal
M}\otimes{\mathcal N})
$$
is a surjection.
\end{Lemma}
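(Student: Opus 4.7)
My plan is to apply the classical \emph{base-point-free pencil trick}: given a line bundle $\mathcal{L}$ on $C$ equipped with a base-point-free pencil $V \subset \Gamma(C, \mathcal{L})$ (a $2$-dimensional subspace whose evaluation map $V \otimes \mathcal{O}_C \to \mathcal{L}$ is surjective), tensoring the resulting short exact sequence $0 \to \mathcal{L}^{-1} \to V \otimes \mathcal{O}_C \to \mathcal{L} \to 0$ with a coherent sheaf $\mathcal{F}$ and passing to cohomology shows that $V \otimes \Gamma(C, \mathcal{F}) \to \Gamma(C, \mathcal{L} \otimes \mathcal{F})$ is surjective whenever $H^1(C, \mathcal{F} \otimes \mathcal{L}^{-1}) = 0$. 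I recall that any line bundle on $C$ of degree $\geq 2g+1$ is base-point-free with $h^0 \geq g+2 \geq 2$, so over the algebraically closed (hence infinite) field $K$ a generic $2$-dimensional subspace of its global sections is base-point-free; and that $H^1$ of a line bundle of degree $\geq 2g-1$ vanishes by Serre duality.

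Since the statement is symmetric in $\mathcal{M}$ and $\mathcal{N}$, I would assume $\deg \mathcal{M} \leq \deg \mathcal{N}$ and split on the gap $\delta := \deg \mathcal{N} - \deg \mathcal{M}$. If $\delta \geq 2g-1$, the pencil trick applies directly with $\mathcal{L} = \mathcal{M}$ and $\mathcal{F} = \mathcal{N}$: $\mathcal{M}$ admits a base-point-free pencil $V$ since $\deg \mathcal{M} \geq 4g+2 \geq 2g+1$, and $H^1(\mathcal{N} \otimes \mathcal{M}^{-1}) = 0$ by the hypothesis on $\delta$, yielding surjectivity of $V \otimes \Gamma(\mathcal{N}) \to \Gamma(\mathcal{M} \otimes \mathcal{N})$ and hence of the full multiplication map.

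If instead $\delta < 2g-1$, the direct trick is insufficient and I factor $\mathcal{N}$ in order to apply the trick twice. Fix any effective divisor $D$ on $C$ of degree $2g+1$ and set $\mathcal{N}_1 := \mathcal{O}_C(D)$, $\mathcal{N}_2 := \mathcal{N} \otimes \mathcal{O}_C(-D)$; the hypothesis $\deg \mathcal{N} \geq 2(2g+1)$ forces $\deg \mathcal{N}_2 \geq 2g+1$, so both factors admit base-point-free pencils. A first application of the pencil trick with $\mathcal{L} = \mathcal{N}_1$, $\mathcal{F} = \mathcal{M}$ requires $H^1(\mathcal{M} \otimes \mathcal{N}_1^{-1}) = 0$, i.e.\ $\deg \mathcal{M} \geq 4g$, which is satisfied. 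A second application with $\mathcal{L} = \mathcal{N}_2$, $\mathcal{F} = \mathcal{M} \otimes \mathcal{N}_1$ requires $\deg(\mathcal{M} \otimes \mathcal{N}_1 \otimes \mathcal{N}_2^{-1}) = \deg \mathcal{M} - \deg \mathcal{N} + 4g + 2 \geq 2g-1$, i.e.\ $\delta \leq 2g+3$, which again holds in this case. The composition yields a surjection $\Gamma(\mathcal{M}) \otimes \Gamma(\mathcal{N}_1) \otimes \Gamma(\mathcal{N}_2) \to \Gamma(\mathcal{M} \otimes \mathcal{N})$; by associativity this composed map factors through the multiplication $\Gamma(\mathcal{N}_1) \otimes \Gamma(\mathcal{N}_2) \to \Gamma(\mathcal{N})$, so $\Gamma(\mathcal{M}) \otimes \Gamma(\mathcal{N}) \to \Gamma(\mathcal{M} \otimes \mathcal{N})$ is surjective, as required.

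The main obstacle is precisely the regime of comparable degrees $\delta < 2g-1$: here neither a pencil on $\mathcal{M}$ nor one on $\mathcal{N}$ alone produces the $H^1$-vanishing needed, and one must split one of the line bundles into two factors of degree $\geq 2g+1$ and chain the pencil tricks through the intermediate $\Gamma(\mathcal{M}\otimes\mathcal{N}_1)$. The slack between the hypothesis $\deg \geq 2(2g+1)$ and the minimal very-ample threshold $\deg \geq 2g+1$ is exactly what permits such a decomposition with both factors still very ample, and both pencil-trick $H^1$-vanishings holding simultaneously.
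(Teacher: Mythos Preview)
Your proof is correct. Both your argument and the paper's rest on the same core multiplication step: if $\mathcal{L}$ is a line bundle of degree $\geq 2g+1$ and $H^{1}(C,\mathcal{F}\otimes\mathcal{L}^{-1})=0$, then $\Gamma(C,\mathcal{L})\otimes\Gamma(C,\mathcal{F})\to\Gamma(C,\mathcal{L}\otimes\mathcal{F})$ is onto. You prove this directly via the base-point-free pencil trick, whereas the paper cites Castelnuovo--Mumford regularity (Mumford, \emph{Lectures on Curves on an Algebraic Surface}, Lecture~14); on a curve the two are essentially the same argument. The organizational difference is that the paper avoids your case split on $\delta=\deg\mathcal{N}-\deg\mathcal{M}$ by writing $\mathcal{M}\cong\mathcal{A}^{\otimes q}\otimes\mathcal{B}$ with $\deg\mathcal{A}=2g+1$ and $2g+1\le\deg\mathcal{B}<2(2g+1)$, then iteratively passing from $\Gamma(C,\mathcal{A}^{\otimes i}\otimes\mathcal{N})$ to $\Gamma(C,\mathcal{A}^{\otimes(i+1)}\otimes\mathcal{N})$ and finally tensoring by $\Gamma(C,\mathcal{B})$. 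Your version needs at most two applications of the multiplication step and is self-contained (no external regularity citation); the paper's version is uniform with no case distinction but may iterate many times.
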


\begin{proof} If ${\mathcal L}$ is a line bundle on $C$, then $H^1(C,{\mathcal L})=0$ if $\text{deg}({\mathcal
L})>2g-2$ and ${\mathcal L}$ is very ample if $\text{deg}({\mathcal L})\ge 2g+1$ (Chapter IV,
Section 3 \cite{Ha}).

Suppose that ${\mathcal L}$ is very ample and ${\mathcal G}$ is another line bundle on $C$. If
$\text{deg}({\mathcal G})>2g-2-\text{deg}({\mathcal L})$, then ${\mathcal G}$ is 2-regular for
${\mathcal L}$ (Lecture 14, \cite{M1}). Thus if $\text{deg}({\mathcal G})>2g-2+\text{deg}({\mathcal
L})$, then
$$
\Gamma(C,{\mathcal G})\otimes\Gamma(C,{\mathcal L})\rightarrow \Gamma(C,{\mathcal
G}\otimes{\mathcal L})
$$
is a surjection by Castelnuovo's Proposition, Lecture 14, page 99 \cite{M1}.

We now apply the above to prove the lemma.  Write ${\mathcal M}\cong {\mathcal A}^{\otimes
q}\otimes {\mathcal B}$ where $\mathcal A$ is a line bundle such that $\text{deg}({\mathcal
A})=2g+1$, and $2g+1\le \text{deg}({\mathcal B})<2(2g+1)$. $\text{deg}({\mathcal
N})>2g-2+\text{deg}({\mathcal A})$. Thus there exists a surjection
$$
\Gamma(C,{\mathcal N})\otimes\Gamma(C,{\mathcal A})\rightarrow \Gamma(C,{\mathcal
A}\otimes{\mathcal N}).
$$
We iterate to get surjections
$$
\Gamma(C,{\mathcal A}^{\otimes i}\otimes{\mathcal N})\otimes
\Gamma(C,{\mathcal A})\rightarrow \Gamma(C,{\mathcal A}^{\otimes(i+1)}\otimes{\mathcal N})
$$
for $i\le q$, and a surjection
$$
\Gamma(C,{\mathcal A}^{\otimes q}\otimes{\mathcal N})\otimes\Gamma(C,{\mathcal B})\rightarrow
\Gamma(C,{\mathcal M}\otimes{\mathcal N}).
$$
\end{proof}

We now prove  Theorem \ref{TheoremA1}.  For the construction, we start with an example from Section 6 of
\cite{CS}. There exists an algebraically closed field $K$ of characteristic $p$, a curve $C$ of genus 2 over  $K$, a point $q\in C$ and a line bundle ${\mathcal M}$ on $C$ of degree 0, such that
for $n\ge 0$,
$$
H^1(C,{\mathcal O}_C(q)\otimes{\mathcal M}^{\otimes n}) =\left\{\begin{array}{ll} 1&\text{ if
}n=p^t\text{ for some }t\ge 0,\\
0&\text{otherwise.} \end{array}\right.
$$
Further, $H^1(C,{\mathcal O}_C(2q)\otimes{\mathcal M}^{\otimes n})=0$ for all $n>0$.

Let $a=p+1$. Let $E$ be an elliptic curve over $K$, and let $T=E\times E$, with projections
$\pi_i:T\rightarrow E$. Let $b\in E$ be a point and  let
${\mathcal A}=\pi_1^*({\mathcal O}_E(b))\otimes\pi_2^*({\mathcal O}_E(b))$. Let $X=T\times C$, with
projections $\phi_1:X\rightarrow T$, $\phi_2:X\rightarrow C$.
Let ${\mathcal L}={\mathcal O}_C(q)$.
Let
$$
\mathcal F_1=\phi_1^*({\mathcal A})^{\otimes a}\otimes\phi_2^*({\mathcal L})^{\otimes a},
$$
$$
\mathcal F_2=\phi_1^*({\mathcal A})^{\otimes (1+a)}\otimes\phi_2^*({\mathcal
L}^{\otimes(1+a)}\otimes{\mathcal M}^{-1}).
$$
For $m,n\ge 0$, we have
\begin{equation}\label{eq3}
\begin{array}{lll}
\Gamma(X,{\mathcal F}_1^{\otimes m}\otimes{\mathcal F}_2^{\otimes n}) &=& \Gamma(T,{\mathcal
A}^{\otimes (ma+n(1+a))})\otimes\Gamma(C,{\mathcal L}^{\otimes (ma+n(1+a))}\otimes{\mathcal
M}^{-\otimes n})\\&=&\Gamma(T,{\mathcal A}^{\otimes a})^{\otimes m}\otimes\Gamma(T,{\mathcal
A}^{\otimes (1+a)})^{\otimes n} \otimes\Gamma(C,{\mathcal L}^{a})^{\otimes
m}\otimes\Gamma(C,{\mathcal
L}^{\otimes(1+a)}\otimes{\mathcal M}^{-1})^{\otimes n}\\
&=&\Gamma(X,{\mathcal F}_1)^{\otimes m}\otimes\Gamma(X,{\mathcal F}_2)^{\otimes n}
\end{array}
\end{equation}
by the K\"unneth formula (IV of Lecture 11 \cite{M1}) and  Lemma \ref{LemmaA1}.

Let $R_{m,n}= \Gamma(X,{\mathcal F}_1^{\otimes m}\otimes{\mathcal F}_2^{\otimes n})$.
$R=\bigoplus_{m,n\ge 0}R_{m,n}$ is a standard bigraded $K$-algebra by (\ref{eq3}). Thus (\ref{eqN15}) holds.

By the Riemann Roch Theorem, we compute,  \begin{equation}\label{eq4} h^0(C,{\mathcal
L}^{\otimes r}\otimes{\mathcal M}^{-\otimes s}) =h^1(C,{\mathcal L}^{\otimes r}\otimes{\mathcal
M}^{-\otimes s})+r-1, \end{equation} and for $s<0$,

\begin{equation}\label{eq5}
h^1(C,{\mathcal L}^{\otimes r}\otimes{\mathcal M}^{-\otimes s}) =\left\{\begin{array}{ll} 1-r& r<0,\\
1&r=0, s< 0,\\
1& r=1,s=-p^t, \mbox{ for some }t\in{\bf N,}\\
0&r=1, s\ne -p^t\mbox{ for some }t\in{\bf N,}\\
0&r=2,s<0,\\
0&r\ge 3.
\end{array}\right.
\end{equation}
 We further have
\begin{equation}\label{eq6}
h^1(T,{\mathcal A}^{\otimes r})=\left\{
\begin{array}{ll}
0&r\ne 0,\\
2&r = 0,
\end{array}\right.
\end{equation}
and
\begin{equation}\label{eq20}
h^0(T,{\mathcal A}^{\otimes r})=\left\{
\begin{array}{ll}
0&r< 0,\\
1&r=0,\\
r^2&r > 0.
\end{array}\right.
\end{equation}

By (\ref{eqN15}), for $n\in{\bf Z}$, we have
$$
\mbox{dim}_K(H^2_Q(R)_{n}) = \sum_{m\ge 0}h^1(X,{\mathcal F}_1^{\otimes m}\otimes
{\mathcal F}_2^{\otimes n}). $$

By the K\"unneth formula,
$$
\begin{array}{lll}
H^1(X,{\mathcal F}_1^{\otimes m}\otimes {\mathcal F}_2^{\otimes n})&\cong &H^0(T,{\mathcal
A}^{\otimes (ma+n(1+a))})\otimes H^1(C,{\mathcal L}^{\otimes (ma+n(1+a))}\otimes{\mathcal
M}^{-\otimes n})\\
&& \oplus H^1(T,{\mathcal A}^{\otimes (ma+n(1+a))})\otimes H^0(C,{\mathcal L}^{\otimes
(ma+n(1+a))}\otimes{\mathcal M}^{-\otimes n}).
\end{array}
$$
Thus by (\ref{eq4}) - (\ref{eq20}), we have for $j> 0$,

$$
\text{dim}_K(H^2_Q(R)_{-j})
 =\left\{
\begin{array}{ll}
1 & j\equiv 0 \mbox{ (mod) }a,\\
1 & j= p^t\mbox{ for some odd } t\in{\bf N,}\\
0&\mbox{otherwise.}
\end{array}\right.
$$
and we have the conclusions of Theorem \ref{TheoremA1}. \vskip .4truein Theorem \ref{TheoremA2} gives
an example of failure of tameness of local cohomology with larger growth.

\begin{Theorem}\label{TheoremA2}
Suppose that $K$ is an algebraically closed field. Then there
exists a normal standard graded $K$-algebra $R_0$ over $K$
  with
$\dim(R_0)=4$, and a normal standard graded $R_0$-algebra $R$ with $\text{dim}(R)=5$ such
that  for $j> 0$,
$$
\dim_K(H^3_{Q}(R)_{-j})= \left\{\begin{array}{ll}
6j&\text{ if $j$ is even,}\\
0&\text{ if $j$ is odd,}\\
\end{array}\right.
$$
where  $Q=\bigoplus_{n>0}R_n$.
\end{Theorem}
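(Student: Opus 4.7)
The plan is to extend the construction of Theorem~\ref{TheoremN19} by multiplying the abelian surface there with an elliptic curve, so that the period-two behaviour on the surface combines via the K\"unneth formula with linear growth coming from the curve.

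First I would set up the variety. Let $S$ be the abelian surface over the algebraically closed field $K$ from \cite{CH} recalled in the proof of Theorem~\ref{TheoremN19}, with ample divisor $H$, $2$-torsion divisor $D$, and integer parameters $r_2,l,a$ with $r_2$ odd. Let $E$ be an elliptic curve over $K$ with line bundles $\mathcal N_1,\mathcal N_2$ of degrees $b_1<b_2$ satisfying $b_2-b_1=3$ and $b_1\geq 2(2g+1)=6$. Put $X=S\times E$ with projections $\phi_S,\phi_E$ and define
\[
\mathcal F_1=\phi_S^*\mathcal O_S(r_2 l a H)\otimes \phi_E^*\mathcal N_1,\qquad \mathcal F_2=\phi_S^*\mathcal O_S(r_2(D+al H))\otimes \phi_E^*\mathcal N_2.
\]
Both $\mathcal F_i$ are very ample on $X$. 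With $R_{m,n}=\Gamma(X,\mathcal F_1^{\otimes m}\otimes \mathcal F_2^{\otimes n})$ and $R=\bigoplus_{m,n\geq 0}R_{m,n}$, the K\"unneth formula combined with Lemma~\ref{LemmaA1} on $E$ and the corresponding surjectivity on $S$ (inherited from the construction of Theorem~\ref{TheoremN19}) shows that $R$ is standard bigraded. The framework of Section~\ref{Section2} then realises $R$ as the normal homogeneous coordinate ring of $V=\mathbb P(\mathcal F_1\oplus \mathcal F_2)$, so $R$ is normal with $\dim R=5$ and $\dim R_0=4$.

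For the main computation, by (\ref{eqN15}) with $l=3$ one has
\[
\dim_K H^3_Q(R)_{-j}=\sum_{m\geq 0} h^2(X,\mathcal F_1^{\otimes m}\otimes \mathcal F_2^{\otimes(-j)}),
\]
and since $E$ is a curve the K\"unneth formula yields
\[
h^2(X,\mathcal F_1^m\otimes\mathcal F_2^{-j})=h^1(S,\mathcal L_S)\,h^1(E,\mathcal L_E)+h^2(S,\mathcal L_S)\,h^0(E,\mathcal L_E),
\]
where $\mathcal L_S=\mathcal O_S((m-j)r_2 al H-jr_2 D)$ and $\mathcal L_E=\mathcal N_1^m\otimes\mathcal N_2^{-j}$. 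The analysis splits by cases using (\ref{eqN25}), Serre duality on $S$ (noting $K_S=0$ and that $D$ is numerically trivial), and Riemann--Roch on $E$. For $m=j$ with $j$ even, $\mathcal L_S=\mathcal O_S(-jr_2 D)$ is the trivial bundle, so $h^1(S,\mathcal L_S)=2$ and $h^2(S,\mathcal L_S)=1$, while $\mathcal L_E$ has nonzero degree $-3j$, giving $h^1(E,\mathcal L_E)=3j$ and $h^0(E,\mathcal L_E)=0$, hence the summand $2\cdot 3j+1\cdot 0=6j$. For $m=j$ with $j$ odd, $\mathcal L_S$ is a nontrivial $2$-torsion bundle on $S$, so $h^1=h^2=0$. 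For $m\neq j$, (\ref{eqN25}) gives $h^1(S,\mathcal L_S)=0$, and the remaining cross term vanishes: when $m<j$, $\deg\mathcal L_E=mb_1-jb_2<0$ forces $h^0(E,\mathcal L_E)=0$, while when $m>j$ the divisor $-(m-j)r_2 al H+jr_2 D$ is anti-ample on $S$, forcing $h^2(S,\mathcal L_S)=0$ via Serre duality. Summing over $m$ gives exactly $6j$ for $j$ even and $0$ for $j$ odd.

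The main obstacle is precisely this joint vanishing: the linear form of the answer depends on both the choice $b_1<b_2$ on $E$ (killing $h^0(E)$ for $m<j$) and the abelian surface structure (killing $h^2(S)$ for $m>j$), so that the potentially large cross term $h^2(S)\cdot h^0(E)$ never contributes. Once this is in place, standard bigradedness, normality, and the dimension count go through exactly as in Section~\ref{Section2} and the proof of Theorem~\ref{TheoremA1}.
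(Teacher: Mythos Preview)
Your argument is correct, and it reaches the same conclusion by a genuinely different construction from the paper's.

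The paper does not use the abelian surface from \cite{CH} at all here. Instead it takes $X=E\times E\times E$ for a single elliptic curve $E$ with $\mathcal L=\mathcal O_E(3q)$, and sets
\[
\mathcal F_1=\pi_1^*\mathcal L^{2}\otimes\pi_2^*\mathcal L^{2}\otimes\pi_3^*\mathcal L^{2},\qquad
\mathcal F_2=\pi_1^*\mathcal L\otimes\pi_2^*\mathcal L\otimes\pi_3^*\mathcal L^{2}.
\]
After Serre duality ($\omega_X\cong\mathcal O_X$), the K\"unneth formula shows $h^1(X,\mathcal F_1^{m}\otimes\mathcal F_2^{n})$ vanishes unless $2m+n=0$, in which case it equals $2\,h^0(E,\mathcal L^{n})=6n$; the parity is produced by the divisibility condition $2\mid n$ rather than by a $2$-torsion bundle.

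Your route is more modular: you take an example that already oscillates with period two (Theorem~\ref{TheoremN19}) and multiply by a curve equipped with bundles whose degree gap is $3$, so the value $6j=2\cdot 3j$ factors visibly as $h^1(S,\mathcal O_S)\cdot h^1(E,\mathcal L_E)$. This makes transparent how to promote a periodic example to one with linear growth, and the same trick would work with other base surfaces. The paper's route, by contrast, is more self-contained: it needs nothing from \cite{CH}, only a single elliptic curve and the standard facts (\ref{eq7}), (\ref{eq8}), (\ref{eq11}), so the verification of standard bigradedness and the vanishing analysis are shorter and entirely elementary. In your version those points rest on the (admittedly routine) surjectivity of multiplication maps on the abelian surface established in \cite{CH}, together with Lemma~\ref{LemmaA1} on the elliptic factor.
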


\begin{proof} Let $E$ be an elliptic curve over $K$, and let $q\in E$ be a point.
Let ${\mathcal L}={\mathcal O}_E(3q)$. By Proposition IV.4.6 \cite{Ha}, $\mathcal L$ is very ample on $E$, and
\begin{equation}\label{eq11}
\oplus_{n\ge 0}\Gamma(E,{\mathcal L}^{\otimes n})
\end{equation}
is generated in degree 1 as a $K$-algebra.
For $n\in{\bf N}$,
\begin{equation}\label{eq7}
h^0(C,{\mathcal L}^{\otimes n})=\left\{
\begin{array}{ll}
0&n<0,\\
1&n=0,\\
3n&n>0.
\end{array}\right.
\end{equation}
and
\begin{equation}\label{eq8}
h^1(C,{\mathcal L}^{\otimes n})=\left\{
\begin{array}{ll}
-3n&n<0,\\
1&n=0,\\
0&n>0.
\end{array}\right.
\end{equation}

Let $X=E^3$, with the three canonical projections $\pi_i:X\rightarrow E$.
Define
$$
{\mathcal F}_1=\pi_1^*({\mathcal L}^{\otimes 2})\otimes
\pi_2^*({\mathcal L}^{\otimes 2})\otimes
\pi_3^*({\mathcal L}^{\otimes 2})
$$
and
$$
{\mathcal F}_2=\pi_1^*({\mathcal L})\otimes
\pi_2^*({\mathcal L})\otimes
\pi_3^*({\mathcal L}^{\otimes 2}).
$$
Let
$$
R_{m,n}=\Gamma(X,{\mathcal F}_1^{\otimes m}\otimes{\mathcal F}_2^{\otimes n}),
$$
$$
R=\bigoplus_{m,n\ge 0}R_{m,n}.
$$
By (\ref{eq11}) and the K\"unneth formula, $R$ is standard bigraded. By (\ref{eqN15}), the fact that $\omega_X\cong{\mathcal O}_X$  and Serre duality,

$$
\mbox{dim}_K(H^3_Q(R)_{-j})=\sum_{m\ge 0}h^2(X,{\mathcal F}_1^{\otimes m}\otimes
{\mathcal F}_2^{-\otimes j})
=\sum_{m\le 0}h^1(X,{\mathcal F}_1^{\otimes m}\otimes{\mathcal F}_2^{\otimes j})
$$
for $j\in{\bf Z}$.

Now by (\ref{eq7}), (\ref{eq8}) and the K\"unneth formula,
 we have that for $n>0$,
$$
h^1(X,{\mathcal F}_1^{\otimes m}\otimes{\mathcal F}_2^{\otimes n})=\left\{
\begin{array}{ll}
0&\text{ if }2m+n\ne 0,\\
2h^0(X,{\mathcal L}^{\otimes n})&\text{ if }2m+n=0.
\end{array}\right.
$$

Thus
the
conclusions of Theorem \ref{TheoremA2} hold.
\end{proof}

The following theorem gives an example of tame, but still rather strange
local cohomology.
Let $[x]$ be the greatest integer in a real number $x$.

\begin{Theorem}\label{TheoremA3}
Suppose that $K$ is an algebraically closed field. Then there
exists a normal standard graded $K$-algebra $R_0$
  with
$\text{dim}(R_0)=3$, and a normal standard graded $R_0$-algebra $R$ with $\text{dim}(R)=4$ such
that  for $j> 0$,
$$
\dim_K(H^2_{Q}(R)_{-j})=
162\left(j^2\left(\left[\frac{j}{\sqrt{2}}\right]+\frac{1}{2}\right)
-\frac{1}{3}\left[\frac{j}{\sqrt{2}}\right] \left(\left[\frac{j}{\sqrt{2}}\right]+1\right)
\left(2\left[\frac{j}{\sqrt{2}}\right]+1\right)\right)
$$
and
$$
\lim_{j\rightarrow \infty}\frac{\dim_K(H^2_{Q}(R)_{-j})}{j^3} =54\sqrt{2}
$$
where  $Q=\bigoplus_{n>0}R_n$.
\end{Theorem}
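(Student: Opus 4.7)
The plan is to realize the example through the construction of Section~\ref{Section2}, engineering an abelian surface so that the auxiliary quadratic $(m\mathcal{F}_1-j\mathcal{F}_2)^2$ changes sign at the irrational values $m=j\pm j/\sqrt 2$; this is what forces the $[j/\sqrt 2]$ in the conclusion and is the geometric source of the $\sqrt 2$. Concretely, take $X=E\times E$ for an elliptic curve $E$ over $K$ without complex multiplication, so that $\mathrm{NS}(X)$ has rank three, generated by the fibre classes $F_1=\pi_1^{-1}(\mathrm{pt})$, $F_2=\pi_2^{-1}(\mathrm{pt})$ and the diagonal $\Delta$, with $F_1^2=F_2^2=\Delta^2=0$ and $F_1\cdot F_2=F_1\cdot\Delta=F_2\cdot\Delta=1$. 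I put $\mathcal{F}_1=9F_1+18F_2$ and $\mathcal{F}_2=27F_2+3\Delta=3(9F_2+\Delta)$. Nakai--Moishezon shows $9F_2+\Delta$ (and $3F_1+6F_2$) is ample, since positivity against any graph $\Delta_n$ of multiplication-by-$n$ reduces to $(9F_2+\Delta)\cdot\Delta_n=9n^2+(n-1)^2=10n^2-2n+1>0$. By Lefschetz's theorem for abelian varieties, $\mathcal{F}_1$ and $\mathcal{F}_2$ are very ample, and by Mumford's theorem on multiplication of sections the algebra $R=\bigoplus_{m,n\ge0}\Gamma(X,\mathcal{F}_1^{\otimes m}\otimes\mathcal{F}_2^{\otimes n})$ is standard bigraded. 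Then $R_0$ is the normal three-dimensional homogeneous coordinate ring of $(X,\mathcal{F}_1)$, and $R$ is the normal four-dimensional coordinate ring of the smooth variety $V=\mathbf{P}(\mathcal{F}_1\oplus\mathcal{F}_2)$.

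Applying~(\ref{eqN15}) gives $\dim_K H^2_Q(R)_{-j}=\sum_{m\ge0}h^1(X,\mathcal{F}_1^{\otimes m}\otimes\mathcal{F}_2^{\otimes-j})$. Since $K_X\equiv0$ and $\chi(\mathcal{O}_X)=0$, Riemann--Roch gives $\chi(\mathcal{L})=\mathcal{L}^2/2$, and by Mumford's structure theorem the cohomology of any non-numerically-trivial line bundle on an abelian surface is concentrated in its index degree: $h^1(\mathcal{L})=-\mathcal{L}^2/2$ when $\mathcal{L}^2<0$ (index one), and $h^1(\mathcal{L})=0$ otherwise. A direct intersection computation produces $\mathcal{F}_1^2=324$, $\mathcal{F}_2^2=162$, $\mathcal{F}_1\cdot\mathcal{F}_2=324$, whence
\[
(m\mathcal{F}_1-j\mathcal{F}_2)^2=324m^2-648mj+162j^2=162\bigl(2(m-j)^2-j^2\bigr),
\]
which is negative precisely when $|m-j|<j/\sqrt 2$; on that range $h^1(X,m\mathcal{F}_1-j\mathcal{F}_2)=81j^2-162(m-j)^2$, while for $m\le j-j/\sqrt 2$ the bundle is antiample and for $m\ge j+j/\sqrt 2$ it is ample, so $h^1=0$ in both cases.

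Writing $k=m-j$ and $M=[j/\sqrt 2]$, the summation range is $k\in\{-M,\dots,M\}$ (legitimate since $j/\sqrt 2<j$ forces $j-M\ge1$), and using $\sum_{k=-M}^M k^2=M(M+1)(2M+1)/3$ one gets
\[
\dim_K H^2_Q(R)_{-j}=\sum_{k=-M}^{M}\bigl(81j^2-162k^2\bigr)=81j^2(2M+1)-54M(M+1)(2M+1).
\]
Rewriting $81j^2(2M+1)=162j^2(M+\tfrac12)$ reproduces the formula in the statement. Tameness is immediate, as $\dim_K H^2_Q(R)_{-j}\ge 81j^2>0$ for every $j\ge1$. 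For the asymptotic, $M\sim j/\sqrt 2$ yields $(2M+1)\sim\sqrt 2\,j$ and $M(M+1)(2M+1)\sim j^3/\sqrt 2$, so $\dim_K H^2_Q(R)_{-j}/j^3\to 81\sqrt 2-54/\sqrt 2=81\sqrt 2-27\sqrt 2=54\sqrt 2$, as claimed.

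The main obstacle I anticipate is the Nakai--Moishezon test for $9F_2+\Delta$, i.e.\ positivity against every irreducible curve on $E\times E$; since up to translation every such curve is either a fibre or the graph of an endomorphism $E\to E$ (which, because $\mathrm{End}(E)=\ZZ$, is just some $\Delta_n$), this reduces to the single uniform inequality $10n^2-2n+1>0$ on $\ZZ$ displayed above. The other delicate-looking point, the standard-bigraded condition, falls out of Mumford's theorem on abelian varieties once one observes that $\mathcal{F}_1=3(3F_1+6F_2)$ and $\mathcal{F}_2=3(9F_2+\Delta)$ are each three-fold multiples of an ample class; this is what ensures (\ref{eqN15}) applies verbatim.
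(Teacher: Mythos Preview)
Your overall strategy matches the paper's: realize $R$ via the construction of Section~\ref{Section2} on the abelian surface $X=E\times E$, choose $\mathcal{F}_1,\mathcal{F}_2$ so that $(m\mathcal{F}_1-j\mathcal{F}_2)^2$ factors with irrational roots involving $\sqrt 2$, and compute $h^1$ via the index theorem and Riemann--Roch. Your $\mathcal{F}_1=9(F_1+2F_2)$ coincides with the paper's; your $\mathcal{F}_2=27F_2+3\Delta$ is different and simpler than the paper's $\mathcal{F}_2=\mathcal{O}_X(3D)\otimes\mathcal{F}_1^{\otimes 3}$ with $D=3(\Delta+C_2)$. The resulting quadratic and summation are cleaner in your version (your substitution $k=m-j$ plays the role of the paper's $r=m+4n$), and the final formula and limit come out the same.

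There are, however, two genuine gaps. First, your Nakai--Moishezon check for $9F_2+\Delta$ is incorrect: it is not true that every irreducible curve on $E\times E$ is, up to translation, a fibre or a graph $\Delta_n$ (a general member of any very ample linear system is a high-genus curve of neither type). The conclusion is still right, but for the reason the paper implicitly uses: on an abelian surface, a line bundle $L$ with $L^2>0$ and $L\cdot H>0$ for one ample $H$ is ample (Mumford's index theorem), and here $(9F_2+\Delta)^2=18>0$ and $(9F_2+\Delta)\cdot(F_1+F_2)=11>0$.

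Second, and more seriously, your one-line appeal to ``Mumford's theorem on multiplication of sections'' does not establish that $R$ is standard bigraded. The paper's choice of $\mathcal{F}_2$ is not incidental: writing $\mathcal{F}_1^m\otimes\mathcal{F}_2^n=\mathcal{O}_X(3nD)\otimes\mathcal{F}_1^{\,m+3n}$ with $m+3n\ge 3$ whenever $n\ge 1$ is exactly what makes the Castelnuovo--Mumford regularity argument go through for the mixed multiplication maps. With your $\mathcal{F}_2$ this fails at the very first step: taking $\mathcal{L}_1=3(F_1+2F_2)$, one computes $(\mathcal{F}_2\otimes\mathcal{L}_1^{-1})^2=-18<0$, so $\mathcal{F}_2$ is not $0$-regular for $\mathcal{L}_1$ and regularity alone does not give surjectivity of $\Gamma(\mathcal{F}_2)\otimes\Gamma(\mathcal{F}_1)\to\Gamma(\mathcal{F}_1\otimes\mathcal{F}_2)$. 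To rescue your choice you would need to invoke a specific surjectivity theorem for products of \emph{distinct} ample bundles on abelian varieties (e.g.\ from Pareschi--Popa's M-regularity theory) and cite it precisely; otherwise, tensoring your $\mathcal{F}_2$ by a suitable power of $\mathcal{F}_1$, as the paper does, costs nothing in the final formula and makes the verification elementary. (The no-CM hypothesis on $E$ is also unnecessary: the paper does not assume it, and all that is used is that $F_1,F_2,\Delta$ span a rank-three sublattice with the stated intersection numbers.)
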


\begin{proof}
We use the method of Example 1.6 \cite{Cu}.  Let $E$ be an elliptic curve
over an algebraically closed field $K$,
and let $p\in E$ be a point. Let $X=E\times E$ with projections
 $\pi_i:X\rightarrow E$.
Let $C_1=\pi_1^*(p)$, $C_2=\pi_2^*(p)$ and
$$
\Delta=\{(q,q)\mid q\in E\}
$$
be the diagonal of $X$. We compute (as in \cite{Cu}) that
\begin{equation}\label{eq9}
(C_1^2)=(C_2^2)=(\Delta^2)=0
\end{equation}
and
\begin{equation}\label{eq10}
(\Delta\cdot C_1)=(\Delta\cdot C_2)=(C_1\cdot C_2)=1.
\end{equation}

If ${\mathcal N}$ is an ample line bundle on $X$, then
\begin{equation}\label{eqD7}
H^i(X,{\mathcal N})=0\text{ for }i>0
\end{equation}
by the vanishing theorem of Section 16 \cite{M2}.

Suppose that ${\mathcal L}$ is a very ample line bundle on $X$, and
$\mathcal M$ is a numerically effective (nef) line bundle. Then $\mathcal M$
is 3 regular for $\mathcal L$, so that
$$
\Gamma(X,{\mathcal M}\otimes{\mathcal L}^{\otimes n})\otimes\Gamma(X,{\mathcal L})\rightarrow \Gamma(X,{\mathcal M}\otimes{\mathcal L}^{\otimes (n+1)})
$$
is a surjection if $n\ge 3$.  $C_1+2C_2$ is an ample divisor by
the Moishezon Nakai criterion (Theorem V.1.10 \cite{Ha}), so that
$3(C_1+2C_2)$ is very ample by Lefschetz's theorem (Theorem, Section 17 \cite{M2}). Let
$$
{\mathcal F}_1={\mathcal O}_X(9(C_1+2C_2)).
$$
Then ${\mathcal O}_X$ is 3 regular for ${\mathcal O}_X(3(C_1+2C_2))$, so we
have surjections
$$
\Gamma(X,{\mathcal F}_1^{\otimes n})\otimes \Gamma(X,{\mathcal F}_1)
\rightarrow \Gamma(X,{\mathcal F}_1^{\otimes (n+1)})
$$
 for all $n\ge 1$.

$\Delta+C_2$ is ample by the Moishezon Nakai criterion. Let
$D=3(\Delta+C_2)$. $D$ is very ample by Lefschetz's theorem, and
thus ${\mathcal O}_X(D)\otimes{\mathcal F}_1$ is very ample.
Let
$$
{\mathcal F}_2={\mathcal O}_X(3D)\otimes{\mathcal F}_1^{\otimes 3}.
$$
${\mathcal O}_X$ is 3 regular for ${\mathcal O}_X(D)\otimes {\mathcal F}_1$, so we
have surjections
$$
\Gamma(X,{\mathcal F_2}^{\otimes n})\otimes \Gamma(X,{\mathcal F}_2)
\rightarrow \Gamma(X,{\mathcal F}_2^{\otimes (n+1)})
$$
 for all $n\ge 1$.

for $n>0$ and $m\ge 0$, we have
$$
{\mathcal F}_1^{\otimes m}\otimes{\mathcal F}_2^{\otimes n}
\cong {\mathcal O}_X(3nD)\otimes{\mathcal F}_1^{\otimes(m+3n)}.
$$
Since $D$ is nef, it is 3 regular for ${\mathcal F}_1$, and
we have a surjection for all $m\ge 0$, $n>0$,
$$
\Gamma(X,{\mathcal F}_1^{\otimes m}\otimes{\mathcal F}_2^{\otimes n})\otimes
\Gamma(X,{\mathcal F}_1)\rightarrow \Gamma(X,{\mathcal F}_1^{\otimes (m+1)}\otimes{\mathcal F}_2^{\otimes n}).
$$
Let
$$
R_{m,n}=\Gamma(X,{\mathcal F}_1^{\otimes m}\otimes{\mathcal F}_2^{\otimes n}).
$$
We have shown that $\oplus_{m,n\ge 0}R_{m,n}$ is a standard bigraded $K$-algebra. Thus
(\ref{eqN15}) holds.

For $m,n\in{\bf Z}$, let ${\mathcal G}={\mathcal F}_1^{\otimes m}\otimes{\mathcal F}_2^{\otimes n}$.
As in Example 1.6 \cite{Cu},  and by (\ref{eqD7}) and Serre Duality ($\omega_X\cong{\mathcal O}_X$ since $X$ is an Abelian variety), we deduce that
\begin{enumerate}
\item[1.] $({\mathcal G}^2)>0$ and $({\mathcal G}\cdot {\mathcal F}_1)>0$
imply ${\mathcal G}$ is ample and $h^1(X,{\mathcal G})=h^2(X,{\mathcal G})=0$.
\item[2.] $({\mathcal G}^2)<0$
implies  $h^0(X,{\mathcal G})=h^2(X,{\mathcal G})=0$.
\item[3.] $({\mathcal G}^2)>0$ and $({\mathcal G}\cdot {\mathcal F}_1)<0$
imply ${\mathcal G}^{-1}$ is ample and $h^0(X,{\mathcal G})=h^1(X,{\mathcal G})=0$.
\end{enumerate}

Let $\tau_2=-4-\frac{\sqrt{2}}{2}$ and $\tau_1=-4+\frac{\sqrt{2}}{2}$.

Using (\ref{eq9}) and (\ref{eq10}), we compute
$$({\mathcal F}_1^2)=2\cdot 162, ({\mathcal F}_2)^2=31\cdot 162,
({\mathcal F}_1\cdot{\mathcal F}_2)=8\cdot 162.
$$
We have
$$
\begin{array}{lll}
({\mathcal G}^2)&=&324(m^2+8mn+\frac{31}{2}n^2)\\
&=&324(m-\tau_1n)(m-\tau_2n).
\end{array}
$$
and
$$
({\mathcal G}\cdot{\mathcal F}_1)=324(m+4n).
$$
Since $\tau_2<-4<\tau_1<0$, for $n<0$ and $m\in{\bf Z}$, we have
\begin{enumerate}
\item[1.] $m>\tau_2n$ if and only if ${\mathcal G}^2>0$ and ${\mathcal G}\cdot{\mathcal F}_1>0$
\item[2.] $\tau_1n<m<\tau_2n$ if and only if $({\mathcal G}^2)<0$
\item[3.] $m<\tau_1n$ if and only if $({\mathcal G}^2)>0$ and
$({\mathcal G}\cdot{\mathcal F}_1)<0$.
\end{enumerate}
By the Riemann Roch Theorem for an Abelian surface (Section 16 \cite{M2}),
$$
\chi({\mathcal G})=\frac{1}{2}({\mathcal G}^2).
$$
Thus for $m\in{\bf Z}$ and $n<0$,
$$
h^1(X,{\mathcal G})=
\left\{\begin{array}{ll}
-\frac{1}{2}({\mathcal G}^2)
=-162(m^2+8mn+\frac{31}{2}n^2)
&\mbox{if
 $\tau_1n<m<\tau_2n$,}\\
0&\mbox{otherwise.}\end{array}\right.
$$
For $n\in{\bf Z}$, let $\sigma(n)=\text{dim}_K(H^2_Q(R_n))$.
By (\ref{eqN15}),
$$
\sigma(n)=\sum_{m\ge 0}h^1(X,{\mathcal F}_1^{\otimes m}\otimes{\mathcal F}_2^{\otimes n}).
$$
For $n<0$, we have
$$
\sigma(n)
=-162(\sum_{\tau_1n<m<\tau_2n}(m^2+8mn+\frac{31}{2}n^2)).
$$
Setting $r=m+4n$, we have
$$
\begin{array}{lll}
\sigma(n)&=& -162(\sum_{\frac{\sqrt{2}}{2}n<r<-\frac{\sqrt{2}}{2}n}
(r^2-\frac{1}{2}n^2)\\
&=& -324\sum_{r=1}^{\left[-\frac{n}{\sqrt{2}}\right]}(r^2-\frac{1}{2}n^2)+81n^2\\
&=&-324\left(\frac{1}{6}\left[-\frac{n}{\sqrt{2}}\right]\left(
\left[-\frac{n}{\sqrt{2}}\right]+1\right)
\left(2\left[-\frac{n}{\sqrt{2}}\right]+1\right)
-\frac{1}{2}n^2\left[-\frac{n}{\sqrt{2}}\right]\right)+81n^2\\

&=&162\left(n^2\left(\left[-\frac{n}{\sqrt{2}}\right]+\frac{1}{2}\right)
-\frac{1}{3}\left[-\frac{n}{\sqrt{2}}\right]\left(
\left[-\frac{n}{\sqrt{2}}\right]+1\right)\left(2\left[-\frac{n}{\sqrt{2}}\right]
+1\right)\right).
\end{array}
$$
We thus have the
conclusions of the theorem.
\end{proof}

\section{Strange examples of Rees Algebras}\label{Section4}

Let notation and assumptions be as in Section \ref{Section2}. Since ${\mathcal F}_1$ is ample, there
exists $l>0$ such that $\Gamma(X,{\mathcal F}_1^{\otimes l}\otimes{\mathcal F}_2^{-1})\ne 0$. Thus we have an embedding ${\mathcal F}_2\otimes{\mathcal F}_1^{-l}\subset {\mathcal O}_X$. Let ${\mathcal A}={\mathcal F}_2\otimes{\mathcal F}_1^{-l}$, which we have embedded as an ideal sheaf of $X$. For $j\ge 0$ and $i\ge jl$, let
$$
T_{ij}=\Gamma(X,{\mathcal F}_1^{\otimes i}\otimes {\mathcal A}^{\otimes j})=R_{i-jl,j}.
$$
For $j\ge 0$, let $T_j=\bigoplus_{i\ge jl}T_{ij}$ and $T=\bigoplus_{j\ge 0}T_j$.
Let $B=\bigoplus_{j>0}T_j$. $R\cong T$ as graded rings over $R_0\cong T_0$, although they have different
bigraded structures.
Thus for all $i,j$ we have
\begin{equation}\label{eqD5}
H_B^{i}(T)_{j}\cong H^{i}_Q(R)_{j}.
\end{equation}
$T_1$ is a homogeneous ideal of $T_0$, and
$T$ is the Rees algebra of $T_1$.  Thus all of the examples of  Section \ref{Section3} can be interpreted as Rees algebras over normal rings $T_0$ with isolated singularities.

 We thus obtain the following theorems from Theorems
\ref{TheoremN19} - \ref{TheoremA3}. Theorems  \ref{TheoremD3}, \ref{TheoremN20} and \ref{TheoremD5}
give examples of Rees algebras with non tame local cohomology.

\begin{Theorem}\label{TheoremD3}
Suppose that $K$ is an algebraically closed field.
Then there exists a normal, standard graded
$K$ algebra $T_0$  with $\dim(T_0)=3$ and a graded ideal
$A\subset T_0$ such that the Rees algebra $T=T_0[At]$ of $A$ is normal,
and
for $j> 0$,
$$
\dim_K(H^2_{B}(T)_{-j})= \left\{\begin{array}{ll}
2&\text{ if $j$ is even,}\\
0&\text{ if $j$ is odd.}\\
\end{array}\right.
$$
where $B$ is the graded ideal $AtT$ of $T$.
\end{Theorem}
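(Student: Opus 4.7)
The plan is to deduce Theorem \ref{TheoremD3} as a direct translation of Theorem \ref{TheoremN19}, using the general re-grading construction set up at the start of Section \ref{Section4}. The substantive content (the period-$2$ non-tame behavior of the relevant local cohomology) is already present in Theorem \ref{TheoremN19}; the only thing to verify is that this content survives the change of grading and reappears as a statement about a Rees algebra.

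First, I would take the normal standard bigraded $K$-algebra $R=\bigoplus_{m,n\geq 0}R_{m,n}$ produced by Theorem \ref{TheoremN19}, built as $R_{m,n}=\Gamma(X,{\mathcal F}_1^{\otimes m}\otimes{\mathcal F}_2^{\otimes n})$ from an Abelian surface $X$ and the very ample line bundles ${\mathcal F}_1,{\mathcal F}_2$ occurring in its proof. Since ${\mathcal F}_1$ is ample, I would then fix $l\gg 0$ with $\Gamma(X,{\mathcal F}_1^{\otimes l}\otimes{\mathcal F}_2^{-1})\neq 0$ and use a nonzero section to embed ${\mathcal A}:={\mathcal F}_2\otimes{\mathcal F}_1^{-l}$ as an ideal sheaf on $X$, following the recipe at the beginning of Section \ref{Section4}.

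Next, I would introduce $T_{ij}:=\Gamma(X,{\mathcal F}_1^{\otimes i}\otimes{\mathcal A}^{\otimes j})=R_{i-jl,j}$ for $i\geq jl$, set $T_j:=\bigoplus_{i\geq jl}T_{ij}$ and $T:=\bigoplus_{j\geq 0}T_j$, and appeal to the observation in Section \ref{Section4} that $T\cong R$ as graded rings over $T_0\cong R_0$, that $T_1$ is a homogeneous ideal $A$ of $T_0$, and that $T\cong T_0[At]$ is the Rees algebra of $A$. In particular, normality of $R$ and $R_0$ transfers to $T$ and $T_0$, since these are the same rings abstractly, and $\dim T_0=\dim R_0=3$.

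Finally, I would invoke the graded isomorphism (\ref{eqD5}), namely $H^i_B(T)_j\cong H^i_Q(R)_j$, to transport the computation of Theorem \ref{TheoremN19} to $T$. For $j>0$ this immediately gives
$$\dim_K H^2_B(T)_{-j}=\dim_K H^2_Q(R)_{-j}=\begin{cases}2 & \text{if $j$ is even},\\ 0 & \text{if $j$ is odd},\end{cases}$$
which is the conclusion. There is no real obstacle in this argument; everything nontrivial has been absorbed either into Theorem \ref{TheoremN19} or into the general framework of Section \ref{Section4}, and the proof is a short bookkeeping unpacking of those two ingredients.
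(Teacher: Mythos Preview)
Your proposal is correct and follows essentially the same approach as the paper: the paper derives Theorem \ref{TheoremD3} by applying the re-grading construction of Section \ref{Section4} to the bigraded ring $R$ of Theorem \ref{TheoremN19} and invoking the isomorphism (\ref{eqD5}) to transport the computation of $H^2_Q(R)_{-j}$ to $H^2_B(T)_{-j}$. Your write-up matches this step for step.
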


\begin{Theorem}\label{TheoremN20}
Suppose that $p$ is a prime number such that $p\equiv 2(\mod) 3$ and $p\ge 11$. Then there
exists a normal standard graded $K$-algebra $T_0$ over a field $K$ of characteristic $p$ with
$\dim(T_0)=4$, and a
graded ideal
$A\subset T_0$ such that the Rees algebra $T=T_0[At]$ of $A$ is normal,
and
for $j> 0$,

$$
\dim_K(H^2_{Q}(T)_{-j})= \left\{\begin{array}{ll} 1&\text{ if $j\equiv 0(\mod)
(p+1)$,}\\
1&\mbox{ if $j=p^t$ for some odd $t\ge 0$,}\\
0&\mbox{ otherwise,}
\end{array}\right.
$$
where  $B$ is the graded ideal $AtT$ of $T$. We have $p^t\equiv -1
(\mod)(p+1)$ for all odd $t\ge 0$.
\end{Theorem}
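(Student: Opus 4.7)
The plan is to deduce Theorem \ref{TheoremN20} directly from Theorem \ref{TheoremA1} by applying the general Rees algebra reinterpretation described at the beginning of Section \ref{Section4}. So I would begin by fixing the algebraically closed field $K$ of characteristic $p\geq 11$ with $p\equiv 2\pmod 3$, the variety $X=T\times C$, and the very ample line bundles $\mathcal F_1,\mathcal F_2$ on $X$ constructed in the proof of Theorem \ref{TheoremA1}, together with the resulting standard bigraded $K$-algebra $R=\bigoplus_{m,n\geq 0}R_{m,n}$ with $R_{m,n}=\Gamma(X,\mathcal F_1^{\otimes m}\otimes\mathcal F_2^{\otimes n})$.

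Next, since $\mathcal F_1$ is ample on $X$, I would choose $l>0$ large enough that $\Gamma(X,\mathcal F_1^{\otimes l}\otimes \mathcal F_2^{-1})\neq 0$; fixing a nonzero section produces an inclusion $\mathcal A:=\mathcal F_2\otimes\mathcal F_1^{-l}\hookrightarrow\mathcal O_X$ realizing $\mathcal A$ as an ideal sheaf on $X$. Following Section \ref{Section4} verbatim, set $T_{i,j}=\Gamma(X,\mathcal F_1^{\otimes i}\otimes\mathcal A^{\otimes j})=R_{i-jl,\,j}$ for $i\geq jl$, $T_j=\bigoplus_{i\geq jl}T_{i,j}$, $T_0=R_0$, and $T=\bigoplus_{j\geq 0}T_j$. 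Then $T_0$ is the normal standard graded $K$-algebra $R_0$ of Theorem \ref{TheoremA1} with $\dim T_0=4$, the ideal $A:=T_1\subset T_0$ is a graded ideal, and $T=T_0[At]$ is the Rees algebra of $A$; normality of $T$ transfers from the normality of $R$, since $T\cong R$ as graded $R_0$-algebras under $T_j\leftrightarrow\bigoplus_{m\geq 0}R_{m,j}$ (only the bigrading is altered).

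Now I would invoke the isomorphism (\ref{eqD5}), $H^i_B(T)_j\cong H^i_Q(R)_j$, where $B=AtT$ corresponds to $Q=\bigoplus_{n>0}R_n$ under the identification $T\cong R$. Combined with the explicit computation already carried out in the proof of Theorem \ref{TheoremA1}, this yields
\[
\dim_K H^2_B(T)_{-j}=\dim_K H^2_Q(R)_{-j}=\begin{cases}1 & j\equiv 0\pmod{p+1},\\ 1 & j=p^t\text{ for some odd }t\geq 0,\\ 0 & \text{otherwise,}\end{cases}
\]
for $j>0$, which is exactly the assertion of Theorem \ref{TheoremN20}; the final congruence $p^t\equiv -1\pmod{p+1}$ for odd $t\geq 0$ is immediate from $p\equiv -1\pmod{p+1}$.

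The only potential obstacle is checking that the general Rees-algebra reinterpretation of Section \ref{Section4} really applies to the particular $(X,\mathcal F_1,\mathcal F_2)$ used in Theorem \ref{TheoremA1}; but the only inputs needed are that $R$ is standard bigraded and normal and that $\mathcal F_1$ is ample, all of which are verified in that proof. Everything else, including the transfer of local cohomology via (\ref{eqD5}) and the preservation of normality and dimensions, is formal once the construction is in place.
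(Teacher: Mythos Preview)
Your proposal is correct and follows essentially the same approach as the paper: Theorem~\ref{TheoremN20} is deduced from Theorem~\ref{TheoremA1} by applying the general Rees algebra reinterpretation of Section~\ref{Section4} (choosing $l$ with $\Gamma(X,\mathcal F_1^{\otimes l}\otimes\mathcal F_2^{-1})\ne 0$, setting $\mathcal A=\mathcal F_2\otimes\mathcal F_1^{-l}$, and using the isomorphism~(\ref{eqD5}) to transfer the local cohomology computation). The paper itself simply states that Theorems~\ref{TheoremD3}--\ref{TheoremD4} are obtained from Theorems~\ref{TheoremN19}--\ref{TheoremA3} via this construction, so your write-up is in fact more detailed than the paper's.
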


\begin{Theorem}\label{TheoremD5}
Suppose that $K$ is an algebraically closed field.
Then there exists a normal, standard graded
$K$-algebra $T_0$  with $\dim(T_0)=4$ and a graded ideal
$A\subset T_0$ such that the Rees algebra $T=R_0[At]$ of $A$ is normal,
and
for $j> 0$,
$$
\dim_K(H^3_{B}(T)_{-j})= \left\{\begin{array}{ll}
6j&\text{ if $j$ is even,}\\
0&\text{ if $j$ is odd,}\\
\end{array}\right.
$$
where $B$ is the graded ideal $AtT$ of $T$.
\end{Theorem}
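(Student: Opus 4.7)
The plan is to deduce Theorem \ref{TheoremD5} directly from Theorem \ref{TheoremA2} via the Rees algebra reinterpretation mechanism set up at the beginning of Section \ref{Section4}. In other words, the proof is essentially a repackaging: the bigraded $K$-algebra $R$ constructed in the proof of Theorem \ref{TheoremA2} will be rewritten as a Rees algebra $T = T_0[At]$, and the local cohomology of $T$ with respect to the Rees ideal $B$ will be computed from (\ref{eqD5}).

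First I would recall the ingredients of Theorem \ref{TheoremA2}: $X = E^3$ for an elliptic curve $E$, the line bundles $\mathcal{F}_1 = \pi_1^*(\mathcal{L}^{\otimes 2})\otimes\pi_2^*(\mathcal{L}^{\otimes 2})\otimes\pi_3^*(\mathcal{L}^{\otimes 2})$ and $\mathcal{F}_2 = \pi_1^*(\mathcal{L})\otimes\pi_2^*(\mathcal{L})\otimes\pi_3^*(\mathcal{L}^{\otimes 2})$, and the bigraded $K$-algebra $R=\bigoplus_{m,n\ge 0}R_{m,n}$ with $R_{m,n}=\Gamma(X,\mathcal{F}_1^{\otimes m}\otimes\mathcal{F}_2^{\otimes n})$. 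Since $\mathcal{F}_1$ is ample on $X$ (being a tensor of pullbacks of ample bundles on the factors), the bundle $\mathcal{F}_1^{\otimes l}\otimes\mathcal{F}_2^{-1}$ is ample for $l\gg 0$, and in particular has a nonzero global section. Fix such an $l$, set $\mathcal{A}=\mathcal{F}_2\otimes\mathcal{F}_1^{-l}$, and view $\mathcal{A}$ as an ideal sheaf on $X$ via the chosen section. The construction at the start of Section \ref{Section4} then produces $T_{ij}=\Gamma(X,\mathcal{F}_1^{\otimes i}\otimes\mathcal{A}^{\otimes j}) = R_{i-jl,j}$, the graded pieces $T_j = \bigoplus_{i\ge jl}T_{ij}$, and $T=\bigoplus_{j\ge 0}T_j$ with $B=\bigoplus_{j>0}T_j$; and $T_1$ is a graded ideal $A$ of $T_0$ so that $T=T_0[At]$ is the Rees algebra of $A$.

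Next I would check the algebraic hypotheses. The subring $T_0 = \bigoplus_{i\ge 0}\Gamma(X,\mathcal{F}_1^{\otimes i})$ is the homogeneous coordinate ring of $X$ under the embedding by $\mathcal{F}_1$. Since $X$ is nonsingular of dimension $3$, we have $\dim T_0 = 4$, and projective normality under the embedding by the (sufficiently positive) bundle $\mathcal{F}_1$ gives normality of $T_0$; the same discussion in Section \ref{Section2} applied to the bigraded algebra $R$ gives that $R$ (hence $T$) is normal of dimension $5$. The isomorphism $R\cong T$ as graded rings over $R_0\cong T_0$ guarantees that normality and standard generation are preserved under the rewriting.

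Finally, I would invoke (\ref{eqD5}), namely $H^i_B(T)_j \cong H^i_Q(R)_j$, and read off the value of $\dim_K H^3_Q(R)_{-j}$ from Theorem \ref{TheoremA2}; this produces the stated formula
\[
\dim_K H^3_B(T)_{-j} = \begin{cases} 6j & j \text{ even}, \\ 0 & j \text{ odd}, \end{cases}
\]
for $j>0$, completing the proof. The only step with any real content is confirming that the $T_0$ and $T$ produced by this rewriting actually satisfy the dimension, normality, and standard-graded requirements, and verifying that $\mathcal{A}$ genuinely embeds into $\mathcal{O}_X$ (which is the only point needing the choice of $l$). All of these are routine given the setup of Section \ref{Section2} and the ampleness of $\mathcal{F}_1$; I do not expect any serious obstacle, since the entire nontrivial computation was already performed in the proof of Theorem \ref{TheoremA2}.
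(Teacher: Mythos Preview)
Your proposal is correct and follows exactly the paper's approach: the paper derives Theorem~\ref{TheoremD5} (along with the other theorems of Section~\ref{Section4}) directly from Theorem~\ref{TheoremA2} via the Rees algebra reinterpretation set up at the start of Section~\ref{Section4} and the identification (\ref{eqD5}). Your elaboration of the dimension and normality checks is a bit more explicit than the paper's terse ``We thus obtain the following theorems from Theorems~\ref{TheoremN19}--\ref{TheoremA3},'' but the argument is the same.
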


\begin{Theorem}\label{TheoremD4}
Suppose that $K$ is an algebraically closed field. Then there
exists a normal standard graded $K$ algebra $T_0$
  with
$\dim(T_0)=3$, and a graded ideal $A\subset T_0$ such that
the Rees algebra $T=T_0[At]$ of $A$ is normal, and
  for $j> 0$,
$$
\dim_K(H^2_{B}(T)_{-j})=
162\left(j^2\left(\left[\frac{j}{\sqrt{2}}\right]+\frac{1}{2}\right)
-\frac{1}{3}\left[\frac{j}{\sqrt{2}}\right]
\left(\left[\frac{j}{\sqrt{2}}\right]+1\right)
\left(2\left[\frac{j}{\sqrt{2}}\right]+1\right)\right)
$$
and
$$
\lim_{j\rightarrow \infty}\frac{\dim_K(H^2_{B}(T)_{-j})}{j^3}
=54\sqrt{2}
$$
where $B$ is the graded ideal $AtT$ of $T$.
\end{Theorem}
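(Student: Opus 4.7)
The plan is to deduce Theorem \ref{TheoremD4} essentially as a corollary of Theorem \ref{TheoremA3}, using the Rees algebra reinterpretation machinery set up at the start of Section \ref{Section4}. I would take the bigraded $K$-algebra $R=\bigoplus_{m,n\geq 0}R_{m,n}$, the abelian surface $X=E\times E$, and the very ample line bundles $\mathcal{F}_1=\mathcal{O}_X(9(C_1+2C_2))$, $\mathcal{F}_2=\mathcal{O}_X(3D)\otimes \mathcal{F}_1^{\otimes 3}$ already constructed in the proof of Theorem \ref{TheoremA3}, and feed them into the apparatus of Section \ref{Section4}.

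Since $\mathcal{F}_1$ is ample, Serre vanishing produces an $l>0$ such that $\Gamma(X,\mathcal{F}_1^{\otimes l}\otimes\mathcal{F}_2^{-1})\neq 0$. Choosing a nonzero section gives an embedding $\mathcal{A}:=\mathcal{F}_2\otimes\mathcal{F}_1^{-l}\hookrightarrow \mathcal{O}_X$ as an ideal sheaf. As in the preamble of Section \ref{Section4}, set
$$T_{ij}=\Gamma(X,\mathcal{F}_1^{\otimes i}\otimes\mathcal{A}^{\otimes j})=R_{i-jl,\,j},\qquad T_j=\bigoplus_{i\geq jl}T_{ij},\qquad T=\bigoplus_{j\geq 0}T_j,$$
so that $T_0=R^0=\bigoplus_{i\geq 0}\Gamma(X,\mathcal{F}_1^{\otimes i})$ is standard graded of dimension $\dim X+1=3$, $A:=T_1$ is a graded ideal of $T_0$, and $T=T_0[At]$ is its Rees algebra, with $B=AtT=\bigoplus_{j>0}T_j$. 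Since $T\cong R$ as rings (only the bigrading is rescaled), the normality of $R$ and of $R_0$ established in Section \ref{Section2} immediately yields normality of $T$ and of $T_0$.

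Next I would invoke the key identification (\ref{eqD5}) of the Section \ref{Section4} construction, namely
$$H^i_B(T)_j\cong H^i_Q(R)_j\qquad\text{for all } i,j.$$
Specialising to $i=2$ and negative index $-j$ with $j>0$, the graded dimensions on the two sides agree. Plugging in the explicit formula for $\dim_K H^2_Q(R)_{-j}$ furnished by the computation in the proof of Theorem \ref{TheoremA3} then yields precisely the claimed expression
$$\dim_K H^2_B(T)_{-j}=162\left(j^2\!\left(\Bigl[\tfrac{j}{\sqrt 2}\Bigr]+\tfrac12\right)-\tfrac13\Bigl[\tfrac{j}{\sqrt 2}\Bigr]\!\left(\Bigl[\tfrac{j}{\sqrt 2}\Bigr]+1\right)\!\left(2\Bigl[\tfrac{j}{\sqrt 2}\Bigr]+1\right)\right).$$

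For the cubic asymptotic, I would replace $[j/\sqrt{2}]$ by $j/\sqrt{2}+O(1)$ in the leading terms: the coefficient of $j^3$ becomes $162\bigl(1/\sqrt 2-\tfrac13\cdot 2/\sqrt 2\bigr)=162\cdot\tfrac{1}{3\sqrt 2}\cdot 2=\tfrac{324}{3\sqrt 2}=54\sqrt 2$, giving the stated limit. There is essentially no obstacle here: everything reduces to applying Section \ref{Section4}'s conversion to Theorem \ref{TheoremA3}. The only points needing explicit verification are (i) the dimension count $\dim T_0=3$, immediate from $T_0=R^0$ and $X$ being a surface, and (ii) that the choice of $l$ does not affect the cohomology formula, which is built into (\ref{eqD5}).
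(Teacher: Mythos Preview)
Your proposal is correct and matches the paper's approach exactly: the paper derives Theorem \ref{TheoremD4} (together with Theorems \ref{TheoremD3}, \ref{TheoremN20}, \ref{TheoremD5}) simply by feeding the data of Theorem \ref{TheoremA3} into the Rees algebra reinterpretation of Section \ref{Section4} and invoking the isomorphism (\ref{eqD5}). One small remark: the cubic asymptotic is already part of the statement of Theorem \ref{TheoremA3}, so it transfers directly via (\ref{eqD5}) and need not be recomputed; your intermediate arithmetic for the leading coefficient is slightly garbled, though the final value $54\sqrt{2}$ is of course correct.
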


By localizing at the graded maximal ideal of $T_0$, we obtain examples
of Rees algebras of local rings with strange local cohomology.

In all  of these  examples, $T_0$ is generalized Cohen Macaulay,
but is not Cohen Macaulay. This follows since in all of these examples,
$$
H^2_{P_0}(R_0)_0\cong H^1(X,{\mathcal O}_X)\ne 0.
$$

\section{Local duality in the examples}\label{Section5}

The example of \cite{CH}, giving failure of tameness of local cohomology,
is stated in Theorem \ref{TheoremA0} of this paper.
The proof of \cite{CH} uses the bigraded local duality theorem
of \cite{HR}, which now follows from the much more general bigraded local
duality theorem, Theorem \ref{mainduality}  and Corollary \ref{iso} of this paper, to conclude that
in our situation, where $R$ is generalized Cohen Macaulay,
\begin{equation}\label{eqN26}
(H_Q^{d-i}(\omega_R)_{-j})^*\cong H_P^i(R)_j
\end{equation}
for $j\gg 0$.

In \cite{CH}, the formula
\begin{equation}\label{eqN27}
\begin{array}{lll}
H_P^i(R)_j&\cong& H^i_{P_0}(R_j)\\
&\cong&\bigoplus_{m\in{\bf Z}}H^{i-1}(X,\widetilde{R_j}(m))\\
&\cong&\bigoplus_{m\in{\bf Z}}H^{i-1}(X,{\mathcal F}_1^{\otimes m}\otimes{\mathcal F}_2^{\otimes j})
\end{array}
\end{equation}
for $i\ge 2$ and $j\ge 0$ is then used with formula (1) of \cite{CH} ((\ref{eqN25}) of this paper)
to prove Theorem \ref{TheoremA0}.

In Section \ref{Section2} we derive (\ref{eqN15}) from which we directly compute
the local cohomology in the examples of this paper. We make essential use of Serre duality
on $X$ in computing the examples.

In this section, we show how (\ref{eqN26}) can be obtained directly from
the geometry of $X$ and $V$, and how this formula can be directly interpreted
as Serre duality on $X$.

Let notation be as in Section \ref{Section2}, so that $K$ is an algebraically closed field,
$\mathcal{F}_1$ and $\mathcal{F}_2$ are very ample line bundles on the nonsingular variety $X$.

Let $\omega_R$ be the dualizing module of $R$, and let $\omega_X$ be the canonical bundle of $X$
(which is a dualizing sheaf on $X$).  For a $K$ module $W$, let
$W'=\mbox{Hom}_K(W,K)$.

\begin{Lemma}
We have that
$$
(\omega_R)_{ij}=\left\{
\begin{array}{ll}
\Gamma(X,{\mathcal F}_1^{\otimes i}\otimes{\mathcal F}_2^{\otimes j}\otimes\omega_X)&\text{if $i\ge 1$ and $j\ge 1$}\\
0&\text{otherwise.}
\end{array}\right.
$$
Set $(\omega_R)^i=\bigoplus_{j\in{\bf Z}}(\omega_R)_{i,j}$, a graded $R^0$ module. The
sheafification of $(\omega_R)^i$ on $X$  is
\begin{equation}\label{eqN12}
\widetilde{(\omega_R)^i}= \left\{\begin{array}{ll} {\mathcal F}_1^{\otimes i}\otimes\omega_X&\mbox{
if $i\ge 1$}\\
0&\mbox{ if $i\le 0$.}
\end{array}\right.
\end{equation}
Set $(\omega_R)_j=\bigoplus_{i\in{\bf Z}}(\omega_R)_{i,j}$, a graded $R_0$ module. The
sheafification of $(\omega_R)_j$ on $X$  is
\begin{equation}\label{eqN13}
\widetilde{(\omega_R)_j}= \left\{\begin{array}{ll} {\mathcal F}_2^{\otimes j}\otimes\omega_X&\mbox{
if $j\ge 1$}\\
0&\mbox{ if $j\le 0$.}
\end{array}\right.
\end{equation}
\end{Lemma}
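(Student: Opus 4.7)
The strategy is to compute $\omega_R$ geometrically, viewing $R$ as the bigraded coordinate ring of the projective bundle $V=\mathbb{P}(\mathcal{F}_1\oplus\mathcal{F}_2)$ with structure map $\pi\colon V\to X$, and extracting the bigraded pieces through the canonical bundle formula plus the projection formula.

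\medskip
First I would invoke projective normality of $V$ in the embedding by $\mathcal{O}_V(1)$ (established in Section~\ref{Section2}) to obtain the standard identifications $H^i_{R_+}(R)_t\cong H^{i-1}(V,\mathcal{O}_V(t))$ for $i\ge 2$ and every $t$. Graded local duality applied to the top local cohomology, combined with Serre duality on the nonsingular variety $V$, then gives
\[
(\omega_R)_t \;\cong\; H^d_{R_+}(R)_{-t}^{*} \;\cong\; H^{\dim V}(V,\mathcal{O}_V(-t))^{*} \;\cong\; H^0(V,\omega_V(t)).
\]
These isomorphisms are equivariant for the $\mathbb{G}_m\times\mathbb{G}_m$-action on $V$ which realizes the bigrading of $R$ (one factor from the standard action on $\mathcal{O}_V(1)$, the other from scaling only the $\mathcal{F}_2$-summand of $\mathcal{F}_1\oplus\mathcal{F}_2$); it is this $\mathbb{G}_m^2$-equivariance that will produce the bigraded, rather than merely totally graded, decomposition.

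\medskip
Next I would compute $\omega_V$ via the canonical-bundle formula for a rank-two projective bundle,
\[
\omega_V \;\cong\; \pi^{*}\bigl(\omega_X\otimes\det(\mathcal{F}_1\oplus\mathcal{F}_2)\bigr)\otimes\mathcal{O}_V(-2) \;=\; \pi^{*}\bigl(\omega_X\otimes\mathcal{F}_1\otimes\mathcal{F}_2\bigr)\otimes\mathcal{O}_V(-2),
\]
and apply the projection formula, using $\pi_{*}\mathcal{O}_V(k)=\bigoplus_{a+b=k,\;a,b\ge 0}\mathcal{F}_1^{\otimes a}\otimes\mathcal{F}_2^{\otimes b}$ for $k\ge 0$ and zero for $k<0$. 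Setting $k=t-2$ and substituting $i=a+1,\ j=b+1$ yields
\[
H^0(V,\omega_V(t)) \;\cong\; \bigoplus_{\substack{i+j=t\\ i,j\ge 1}}\Gamma\bigl(X,\mathcal{F}_1^{\otimes i}\otimes\mathcal{F}_2^{\otimes j}\otimes\omega_X\bigr)
\]
for $t\ge 2$, and zero for $t\le 1$. Reading off $\mathbb{G}_m^2$-weights produces the claimed description of $(\omega_R)_{i,j}$ for $i,j\ge 1$; the vanishing when $\min(i,j)\le 0$ is forced because no such weight appears in any $\pi_{*}\mathcal{O}_V(k)$.

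\medskip
Finally, the sheafification formulas (\ref{eqN12}) and (\ref{eqN13}) are immediate from the explicit bigraded description: for fixed $i\ge 1$, the graded $R^0$-module $(\omega_R)^i$ has $j$-th piece $\Gamma(X,\mathcal{F}_1^{\otimes i}\otimes\mathcal{F}_2^{\otimes j}\otimes\omega_X)$, so it agrees with the module of sections of the coherent sheaf $\mathcal{F}_1^{\otimes i}\otimes\omega_X$ twisted by powers of the polarization $\mathcal{F}_2$ defining $X=\mathrm{Proj}(R^0)$, hence sheafifies to $\mathcal{F}_1^{\otimes i}\otimes\omega_X$; for $i\le 0$ the module itself is zero, so its sheafification vanishes. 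The statement for $(\omega_R)_j$ follows by the symmetric argument with the roles of $\mathcal{F}_1$ and $\mathcal{F}_2$ interchanged. The main obstacle is keeping track of the bigrading carefully through the projection formula; once one invokes naturality of each step with respect to the ambient torus action this is automatic, but it does need to be verified explicitly to make sure the degree shift by $(-1,-1)$ coming from the $\mathcal{F}_1\otimes\mathcal{F}_2$ factor in $\omega_V$ lands the support in the first quadrant $i,j\ge 1$.
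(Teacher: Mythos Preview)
Your proposal is correct and follows essentially the same architecture as the paper: identify $\omega_R$ with $\Gamma_*(\omega_V)$ for $V=\mathbf{P}(\mathcal{F}_1\oplus\mathcal{F}_2)$, compute $\omega_V$, and push down to $X$ to read off the bigraded pieces. The differences are purely in how the two supporting facts are obtained. For the identification $\omega_R\cong\Gamma_*(\omega_V)$, the paper uses the $\mathrm{Ext}$ definition of $\omega_R$ in the ambient polynomial ring and a depth-$\ge 2$ comparison, whereas you go through graded local duality and Serre duality on $V$; both are standard and yield the same isomorphism. For the formula $\omega_V\cong\mathcal{O}_V(-2)\otimes\pi^*(\omega_X\otimes\mathcal{F}_1\otimes\mathcal{F}_2)$, you simply quote the canonical-bundle formula for a projective bundle, while the paper rederives it by hand via adjunction on a fiber and on a section $X_0\subset V$. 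Your route is more concise; the paper's is more self-contained. Your explicit appeal to $\mathbb{G}_m^2$-equivariance to justify the bigraded (not merely total-degree) decomposition is a point the paper leaves implicit, and is a welcome clarification.
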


\begin{proof}
Give $R$ the grading where the elements of degree $e$ in $R$ are  $[R]_e=\sum_{i+j=e}R_{ij}$.

We have realized $R$ (with this grading) as the coordinate ring of the projective embedding of
$V={\bf P}({\mathcal F}_1\oplus{\mathcal F}_2)$ by the very ample divisor ${\mathcal O}_V(1)$, with
projection $\pi:V\rightarrow X$.

Let $\omega_V$ be the canonical line bundle on $V$.
We first calculate
 $\omega_V$. Let $f$ be a fiber of the map $\pi:V\rightarrow X$.
By adjunction, we have that $(f\cdot\omega_V)=-2$. Since
$$
\text{Pic}(V)\cong {\bf Z}{\mathcal O}_V(1)\oplus \pi^*(\text{Pic}(X)),
$$
we see that there exists a line bundle ${\mathcal G}$ on $X$ such that
$$
\omega_V\cong {\mathcal O}_V(-2)\otimes\pi^*({\mathcal G}).
$$
The natural split exact sequence
\begin{equation}\label{eqD1}
0\rightarrow {\mathcal F}_2\rightarrow {\mathcal F}_1\oplus{\mathcal F}_2\rightarrow {\mathcal
F}_1\rightarrow 0
\end{equation}
determines a section $X_0$ of $X$, such that $\pi_*$ of the exact sequence
$$
0\rightarrow {\mathcal O}_V(1)\otimes{\mathcal O}_V(-X_0)\rightarrow {\mathcal O}_V(1)\rightarrow
{\mathcal O}_V(1)\otimes{\mathcal O}_{X_0} \rightarrow 0
$$
is (\ref{eqD1}) (Proposition II.7.12 \cite{Ha}). Thus
$$
{\mathcal O}_V(1)\otimes{\mathcal O}_V(-X_0)\cong\pi^*({\mathcal F}_2)
$$
and
$$
{\mathcal O}_V(1)\otimes{\mathcal O}_{X_0}\cong {\mathcal F}_1.
$$
By adjunction, we have that the canonical line bundle of $X_0$ is
$$
\omega_{X_0}\cong \omega_V\otimes{\mathcal O}_V(X_0)\otimes{\mathcal O}_{X_0}.
$$
 Putting the above
together, we see that
$$
{\mathcal G}\cong {\mathcal F}_1\otimes{\mathcal F}_2\otimes\omega_{X}.
$$
Thus
$$
{\omega_V}\cong{\mathcal O}_V(-2)\otimes\pi^*({\mathcal F}_1\otimes{\mathcal
F}_2\otimes\omega_{X}).
$$
We realize $R$ as a bigraded quotient of a bigraded polynomial ring
$$
S=K[x_1,\ldots,x_m,y_1,\ldots,y_n],
$$
 with $\text{deg}(x_i)=(1,0)$ for all $i$ and
$\text{deg}(y_j)=(0,1)$ for all $j$. Viewing $S$ as a graded $K$-algebra with the grading
determined by $d(x_i)=d(y_j)=1$ for all $i,j$, we have a projective embedding $V\subset {\bf
P}=\text{Proj}(S)$. Since $V$ is nonsingular, we see from Section III.7 of \cite{Ha} that
$$
\omega_V\cong{\mathcal Ext}_{\bf P}^r({\mathcal O}_V,{\mathcal O}_{\bf p}(-e))
$$
where $e=m+n$ is the dimension of $S$, and $r=e-\text{dim}(R)$. $\omega_R$ is defined as
$$
\omega_R=\text{*Ext}^r_S(R,S(-e)) \cong \bigoplus_{m\in{\bf Z}}\text{Ext}^r_{\bf P}({\mathcal
O}_V,{\mathcal O}_{\bf P}(m-e)).
$$
For $m\gg0$,
$$
\Gamma({\bf P},{\mathcal Ext}_{\bf P}^r({\mathcal O}_V,{\mathcal O}_{\bf p}(m-e))) \cong
\text{Ext}^r_{\bf P}({\mathcal O}_V,{\mathcal O}_{\bf P}(m-e))
$$
(by Proposition III.6.9 \cite{Ha}). Thus $\omega_R$ and
$$
\Gamma_*(\omega_V) =\bigoplus_{m\in{\bf Z}}\Gamma(V,\omega_V(m))
$$
are isomorphic in high degree. Since  both modules have depth $\ge 2$ at the maximal bigraded
ideal of $R$, we see that
$$
\omega_R\cong \Gamma_*(\omega_V).
$$

Thus
$$
\begin{array}{lll}
\omega_R
&=&\bigoplus_{m\in{\bf Z}}\Gamma(V,\omega_V(m))\\
&=&\bigoplus_{m\in{\bf Z}}\Gamma(V,{\mathcal O}_V(m-2)\otimes\pi^*({\mathcal F}_1\otimes{\mathcal
F}_2\otimes\omega_X)).
\end{array}
$$
Since a fiber $f$ of $\pi$ satisfies $(f\cdot {\mathcal O}_V(m-2)\otimes\pi^*({\mathcal
F}_1\otimes{\mathcal F}_2))<0$ if $m<2$, we see that (with this grading) $[\omega_R]_m=0$ if $m<2$
and For $m\ge 2$, we have
$$
\begin{array}{lll}
[\omega_R]_m&=& \Gamma(X,S^{m-2}({\mathcal F}_1\oplus{\mathcal F}_2)\otimes{\mathcal F}_1\otimes{\mathcal F}_2)\\
&=&\bigoplus_{i+j=m-2}\Gamma(X,{\mathcal F}_1^{\otimes(i+1)}\otimes{\mathcal
F}_2^{\otimes(j+1)}\otimes\omega_X).
\end{array}
$$
The conclusions of the lemma now follow.
\end{proof}

Suppose that $2\le i\le d-2$. Since ${\mathcal F}_1$ and ${\mathcal F}_2$ are ample, and $d-(i+1)>0$, there exists a
natural number $n_0$ such that

\begin{equation}\label{eqN1}
H^{d-(i+1)}(X,{\mathcal F}_1^{\otimes m}\otimes{\mathcal F}_2^n\otimes\omega_X)=0
\end{equation}
for $n\ge n_0$ and all $m\ge 0$.

By (\ref{eqN12}), we have graded isomorphisms
\begin{equation}\label{eqN3}
H^i_Q(\omega_R)_n\cong \bigoplus_{m\ge 1}H^{i-1}(X,{\mathcal F}_1^{\otimes m}\otimes{\mathcal
F}_2^{\otimes n}\otimes\omega_X)
\end{equation}
for $n\in{\bf Z}$.

By Serre duality,
\begin{equation}\label{eqN4}
H^i_Q(\omega_R)_n\cong \bigoplus_{m\ge 1}(H^{d-i-1}(X,{\mathcal F}_1^{-\otimes m}\otimes{\mathcal
F}_2^{-\otimes n}))'.
\end{equation}
By (\ref{eqN1}), there exists $n_0$ such that

\begin{equation}\label{eqN10}
H^i_Q(\omega_R)_{-n}\cong\bigoplus_{m\in{\bf Z}}(H^{d-i-1}(X,{\mathcal F}_1^{-\otimes
m}\otimes{\mathcal F}_2^{\otimes n}))'
\end{equation}
for $n\ge n_0$.

Now apply the functor
$L^*=\mbox{Hom}_K(L,K)$ on  graded $R_0$-modules, with the  grading
$$
(L^*)_{i}=\mbox{Hom}_K(L_{-i},K)
$$
to (\ref{eqN10}), and compare with (\ref{eqN27}), to obtain

\begin{equation}\label{eqN17}
H_P^{d-i}(R)_n\cong (H^i_Q(\omega_R)_{-n})^*
\end{equation}
for $n\ge n_0$, from which (\ref{eqN26}) immediately follows.

We can now verify that Theorem \ref{TheoremA0} is in fact true for all $j>0$,
using (\ref{eqN3}) and (\ref{eqN25}).

We finally comment that an  alternate proof of Theorem \ref{TheoremN19} for $j\gg0$ is obtained from Theorem \ref{TheoremA0},
Formulas (\ref{eqN15}) and (\ref{eqN3}), the fact that $X$ is an Abelian variety
so that $\omega_X\cong{\mathcal O}_X$, and the observation that
$$
h^1(X,{\mathcal F}_2^{-\otimes n})=h^1(X,{\mathcal F}_2^{\otimes n})=0
$$
for $n>0$.

\end{document}